\newtheorem{theorem}{Theorem}[subsection]
\newtheorem{cor}[theorem]{Corollary}
\newtheorem{prop}[theorem]{Proposition}
\theoremstyle{definition}
\newtheorem{df}[theorem]{Definition}
\newtheorem{rem}[theorem]{Remark}
\newtheorem{ex}[theorem]{Example}
\numberwithin{theorem}{section}
\newcommand{\mb}{\mathbb}
\newcommand{\mc}{\mathcal}
\newcommand{\mf}{\mathfrak}
\newcommand{\ms}{\mathscr}
\newcommand{\s}{\subset}
\begin{document}
\title[A new Hartogs...]{A new Hartogs type extension results for the cross-like objects}

\author{Arkadiusz Lewandowski}

\address{Institute of Mathematics\\ Faculty of Mathematics and Computer Science\\ Jagiellonian University\\ {\L}ojasiewicza 6,
30-348 Kraków, Poland}
\email{arkadiuslewandowski@gmail.com; Arkadiusz.Lewandowski@im.uj.edu.pl}
\thanks{Project operated within the Foundation for Polish Science
IPP Programme ``Geometry and Topology in Physical Models''
co-financed by the EU European Regional Development Fund,
Operational Program Innovative Economy 2007-2013.}
\subjclass[2010]{Primary 32D15; Secondary 32D10}
\keywords{Hartogs theorem, separately holomorphic functions, $\mathscr{A}$-crosses}
\date{}

\begin{abstract}
We discuss the problem of the existence of envelopes of holomorphy of the $\mathscr{A}$-crosses, which leads us to the far-reaching generalizations of the famous Hartogs theorem. We also take under consideration the issue of the existence of ``nice'' general descriptions of envelopes of holomorphy of the cross-like objects in terms of the relative extremal function, which seems to be very natural in the light of the extension theory of separately holomorphic functions on classical crosses and $(N,k)$-crosses. 
\end{abstract}

\maketitle

\section{Introduction}
The celebrated Hartogs theorem (\cite{H1}) states that every separately holomorphic function in several complex variables is necessarily holomorphic as a function of all variables. This over one hundred years old deep result has been generalized in many directions. One of them leads to the theory of the extension of separately holomorphic functions defined on the so-called crosses (\cite{A1}). The newest results in this area concern the $(N,k)$-crosses (\cite{J3}) and the generalized $(N,k)$-crosses (\cite{L1},\cite{Z1}).\\ 
\indent When the cross-like objects are concerned, two basic questions appear in a~natural way. The first question is, whether all separately holomorphic functions defined on such objects extend to some open neighborhood of them. If the answer is positive, we can investigate the envelope of holomorphy of that neighborhood. Therefore, the problem of finding the \emph{envelope of holomorphy of a cross-like object} is well-posed. The second question asks whether we can find a nice description of the envelope of holomorphy of the cross-like object (for instance, in terms of the relative extremal function. Recall that for a subset $A$ of an open set $D\s\mb{C}^n$ the \emph{relative extremal function of $A$ with respect to $D$}, $\boldsymbol{h}^{\star}_{A,D}$ is defined as the upper semicontinuous regularization of the function
$$
\boldsymbol{h}_{A,D}:=\sup\{u:u {\rm\ plurisubharmonic\ on\ }D,u\leq 1,u|_A\leq 0\}).
$$ 
The problem is important, since the shape of the envelopes of holomorphy should be known for further possible applications in problems of analytic extensions.\\
\indent It is known that in the class of the classical crosses as well as the $(N,k)$-crosses all the above-mentioned issues have satisfying solutions (see \cite{J3}). However, let us consider the following example.
\begin{ex}[For details see Example \ref{nine}]\label{starting}Let $N=4$, let $D_j\s\mb{C}^{n_j}$ be pseudoconvex domains, and let $\varnothing\neq A_j\s D_j$ be compact, locally pluriregular (i.e. $\boldsymbol{h}^{\star}_{A_j,U}(a):=\boldsymbol{h}^{\star}_{A_j\cap U,U}(a)=0$ for any point $a\in A_j$ and for any open neighborhood $U$ of $a$), and locally $L$-regular (that is, regular in the sense of the Siciak-Zahariuta extremal function, see \cite{Si1}), $j=1,\ldots,4$. Consider the set 
$$
\mathbf{Q}:=
\begin{array}{l}
(A_1\times D_2\times D_3\times D_4)\cup\\
(D_1\times A_2\times A_3\times D_4)\cup\\
(D_1\times A_2\times D_3\times A_4)\cup\\
(D_1\times D_2\times A_3\times A_4).
\end{array}
$$
Take an $\mc{F},$ the family of all functions $f:\mathbf{Q}\rightarrow\mb{C}$, separately holomorphic in the following sense:\\
\indent$\bullet$\ $f(a_1,\cdot)$ is holomorphic on $D_2\times D_3\times D_4$ for any $a_1\in A_1;$\\
\indent$\bullet$\ $f(\cdot,a_2,a_3,\cdot)$ is holomorphic on $D_1\times D_4$ for any $a_2\in A_2,a_3\in A_3;$\\
\indent$\bullet$\ $f(\cdot,a_2,\cdot,a_4)$ is holomorphic on $D_1\times D_3$ for any $a_2\in A_2,a_4\in A_4$;\\
\indent$\bullet$\ $f(\cdot,a_3,a_4)$ is holomorphic on $D_1\times D_2$ for any $a_3\in A_3, a_4\in A_4.$\\
Observe that $\mathbf{Q}$ is not an $(N,k)$-cross (see \cite{J3}). In particular, it is not a classical cross. It is some cross-like object with branches of different size. It can be shown that there exists a pseudoconvex domain $\widehat{\mathbf{Q}}$ containing $\mathbf{Q}$ and such that each $f\in\mc{F}$ admits a function $\widehat{f},$ holomorphic on $\widehat{\mathbf{Q}}$ and satisfying $\widehat{f}=f$ on $\mathbf{Q},$ i.e. $\widehat{\mathbf{Q}}$ is the envelope of holomorphy of $\mathbf{Q}.$ Moreover, $\widehat{\mathbf{Q}}$ may be described explicitly. Namely, we have the equality
\begin{multline}\label{eqstart}
\widehat{\mathbf{Q}}=\{(z_1,z_2,z_3,z_4)\in D_1\times D_2\times D_3\times D_4:\boldsymbol{h}^{\star}_{A_1,D_1}(z_1)+\\
\frac{1}{2}(\boldsymbol{h}^{\star}_{A_2,D_2}(z_2)+\boldsymbol{h}^{\star}_{A_3,D_3}(z_3)+\boldsymbol{h}^{\star}_{A_4,D_4}(z_4)-1)<1\}.
\end{multline}      
\end{ex}
The above example leads to the concept of the $\mathscr{A}$-crosses (see Definition \ref{quasicross def}), objects more general than the $(N,k)$-crosses: for the first time the cross-like objects admit different sizes of the branches. Therefore, the geometric situation is totally different in comparison with the cross-like objects discussed so far. It seems that the $\mathscr{A}$-crosses might be useful in the investigation of the families of holomorphic functions on pseudoconvex domains described in the spirit of (\ref{eqstart}).\\
\indent We shall show that the envelope of holomorphy of the $\mathscr{A}$-cross exists if we only exclude some ``pathological'' situation. Additionally, it coincides with the envelope of holomorphy of some corresponding classical $2$-fold cross (no matter how complicated the starting $\mathscr{A}$-cross is, cf. Theorem \ref{quasi}). Also, the extension theorem with singularities in the context of $\mathscr{A}$-crosses is delivered (Theorem \ref{quasising}).\\  
\indent Concerning the matter of ``nice'' descritions of envelopes of holomorphy of $\mathscr{A}$-crosses, in Section \ref{sec4} we present how different such descriptions can be, which might be important for further applications.\\
\indent The natural objects treated in this article are Riemann regions (in virtue of Thullen theorem). The interested reader is asked to consult \cite{J1} for a wide exposition of the theory of Riemann regions. In the present paper $\mc{PLP}(X)$ stands for the family of all pluripolar subsets of an arbitrary Riemann region $X$ and $\mc{O}(X)$ is the space of all holomorphic functions on $X$; furthermore, by $\mc{PSH}(X)$ we will denote the family of all plurisubharmonic functions on $X.$ The notion of the \emph{relative extremal functions} and the \emph{local pluriregularity} in the context of Riemann regions goes along the same lines as in the Euclidean case. For a Riemann region over $\mb{C}^n$, say $X$, and its subset $A$, the relative exytemal function of $A$ with respect to $X$ will be abbreviated by $\boldsymbol{h}^{\star}_{A,X}.$  
It is a simple observation that if $A$ is locally pluriregular, then
$\boldsymbol{h}^{\star}_{A,X}\equiv \boldsymbol{h}_{A,X}.$ In that case one could omit the star when using the relative extremal function. However, we shall act according to our aesthetics, and whenever we use the relative extremal function, we use it with the star. For a good background on the topic of the relative extremal function we refer the reader to \cite{J2}.\\
\indent Some parts of the paper were written during the author's stays at the Carl von Ossietzky Universit\"{a}t Oldenburg and at the University of Iceland. The author would like to express his gratitude to Professor Peter Pflug and to Professor Ragnar Sigur{\dh}sson from those institutions for their constant help and inspiring discussions. The author also owes thanks to Professor Marek Jarnicki for his commitment and many valuable remarks.

\section{$\mathscr{A}$-crosses and the main result}
\label{sec3} 
\indent Let $D_j$ be a Riemann domain over $\mb{C}^{n_j}$ and let $\varnothing\neq A_j\s D_j$ for $j=1,\ldots,N,$ $N\geq 2.$\\ 
\indent For $k\in\{1,\ldots,N\}$ let $\mf{I}(N,k):=\{\alpha=(\alpha_1,\ldots,\alpha_N)\in\{0,1\}^N:|\alpha|=k\},$ where  $|\alpha|:=\alpha_1+\ldots+\alpha_N.$\\ 
\indent Put 
\begin{displaymath}
\mathbf{\mc{X}}_{\alpha,j}:=\begin{cases}
D_j, &\text{ if }\alpha_j=1\\
A_j, &\text{ if }\alpha_j=0
\end{cases},\quad \mc{X}_{\alpha}:=\prod_{j=1}^N \mc{X}_{\alpha,j}.
\end{displaymath}
\indent For an $\alpha\in \mf{I}(N,k)$ such that $\alpha_{r_1}=\ldots=\alpha_{r_{k}}=1,\alpha_{i_1}=\ldots=\alpha_{i_{N-k}}=0,$  where $r_1<\ldots<r_k$ and $i_1<\ldots<i_{N-k}$, put
\begin{displaymath}
D_{\alpha}:=\prod_{s=1}^k D_{r_s},\quad A_{\alpha}:=\prod_{s=1}^{N-k} A_{i_s}.
\end{displaymath}
For an $a=(a_1,\ldots,a_N)\in\mc{X}_{\alpha},$ where $\alpha$ is as above, put $$a_{\alpha}^0:=(a_{i_1},\ldots,a_{i_{N-k}})\in A_{\alpha}.$$ 
\indent For every $\alpha\in \mf{I}(N,k)$ and every $a=(a_{i_1},\ldots,a_{i_{N-k}})\in A_{\alpha}$ define the mapping
\begin{displaymath}
\boldsymbol{i}_{a,\alpha}=(\boldsymbol{i}_{a,\alpha,1},\ldots,\boldsymbol{i}_{a,\alpha,N}):D_{\alpha}\rightarrow\mc{X}_{\alpha},
\end{displaymath}
\begin{displaymath}
\boldsymbol{i}_{a,\alpha,j}(z):=\begin{cases}
z_j, &\text{if }\alpha_j=1\\
a_{j}, &\text{if }\alpha_j=0
\end{cases},\quad j=1,\ldots, N,\quad z=(z_{r_1},\ldots,z_{r_k})\in D_{\alpha}
\end{displaymath}
(if $\alpha_j=0,$ then $j\in\{i_1,\ldots,i_{N-k}\}$ and if $\alpha_j=1,$ then $j\in\{r_1,\ldots,r_{k}\}$). \\
\indent In \cite{J3} Jarnicki and Pflug introduced the so-called $(N,k)$-crosses.
\begin{df}\rm
An \emph{$(N,k)$-cross} is defined as
\begin{displaymath}
\mathbf{X}_{N,k}=\mb{X}_{N,k}((A_j,D_j)_{j=1}^N):=\bigcup_{\alpha\in \mf{I}(N,k)}\mc{X}_{\alpha}.
\end{displaymath}
The \emph{envelope of an $(N,k)$-cross} is defined as
\begin{multline*}
\widehat{\mathbf{X}}_{N,k}=\widehat{\mb{X}}_{N,k}((A_j,D_j)_{j=1}^N):=\\\{(z_1,\ldots,z_N)\in D_1\times\ldots\times D_N:\sum_{j=1}^N\boldsymbol{h}^{\star}_{A^{\star}_j,D_j}(z_j)<k\}.
\end{multline*}
\end{df}
In particular, if all $A_j$'s are locally pluriregular, then $\mathbf{X}_{N,k}\s\widehat{\mathbf{X}}_{N,k}.$\\
\indent Observe that for $k=1$ the above definition leads to the classical $N$-fold crosses (\cite{J6}). In the case where $N=2$ sometimes it will be more convenient to use the notation $\mb{X}(A_1,A_2;D_1,D_2):=\mb{X}_{2,1}((A_j,D_j)_{j=1,2}).$
In the sequel we shall intensively use the following
\begin{theorem}[\cite{J3}]\label{cross theorem}
For every function $f,$ separately holomorphic on $\mathbf{X}_{N,k}$ (cf. Definition \ref{sepholo}), there exists an $\widehat{f}\in\mc{O}(\widehat{\mathbf{X}}_{N,k})$ such that $\widehat{f}=f$ on $\mathbf{X}_{N,k}$ and $\widehat{f}(\widehat{\mathbf{X}}_{N,k})\s f(\mathbf{X}_{N,k}).$
\end{theorem}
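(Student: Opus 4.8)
\emph{Overall strategy.}
The plan is to prove the theorem by induction on $N$, reducing the inductive step to the classical two-fold cross theorem (the case $N=2$, $k=1$; see \cite{A1},\cite{J6}), which is the genuine analytic input and which I take as known. Two preliminary reductions come first. Passing, if necessary, to the envelope of holomorphy of each $D_j$, we may assume every $D_j$ is a Riemann domain of holomorphy; and replacing each $A_j$ by its pluriregular kernel $A_j^\star$ — which changes neither $\boldsymbol{h}^{\star}_{A_j,D_j}$ nor, essentially, the class of separately holomorphic functions involved (the discarded part is pluripolar in each slice, and the two-constants theorem along analytic discs shows that a separately holomorphic function is controlled by its values on the sub-cross built from the $A_j^\star$) — we may assume every $A_j$ is locally pluriregular, whence $\mathbf{X}_{N,k}\s\widehat{\mathbf{X}}_{N,k}$. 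Finally, exhausting each $D_j$ by relatively compact Riemann subdomains of holomorphy, the envelopes increase to $\widehat{\mathbf{X}}_{N,k}$ by monotonicity of the relative extremal function and the partial extensions glue by a Vitali/Montel argument, the range inclusion surviving the passage to the limit; so it suffices to treat the ``bounded'' case.

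\emph{Inductive step.}
For $k=N$ there is nothing to prove, since $\mathbf{X}_{N,N}=D_1\times\dots\times D_N=\widehat{\mathbf{X}}_{N,N}$. For $2\le k\le N-1$, split the branches of $\mathbf{X}_{N,k}$ according to the position of the $N$-th coordinate:
$$\mathbf{X}_{N,k}=\big(\mathbf{X}_{N-1,k-1}\times D_N\big)\cup\big(\mathbf{X}_{N-1,k}\times A_N\big),$$
the two $(N-1,\cdot)$-crosses being built from the first $N-1$ pairs $(A_j,D_j)$. For each fixed $a_N\in A_N$, the slice $f(\cdot,a_N)$ is separately holomorphic on the genuine $(N-1,k)$-cross, so by the inductive hypothesis it extends to $\widehat f(\cdot,a_N)\in\mc{O}(\Omega)$, where $\Omega:=\widehat{\mathbf{X}}_{N-1,k}=\{w=(z_1,\dots,z_{N-1}):\sum_{j=1}^{N-1}\boldsymbol{h}^{\star}_{A_j,D_j}(z_j)<k\}$ is a Riemann domain of holomorphy; and for each $w\in\mathbf{X}_{N-1,k-1}$ the slice $f(w,\cdot)$ is holomorphic on $D_N$. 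These two families agree on the overlap $\mathbf{X}_{N-1,k-1}\times A_N$ (since $\mathbf{X}_{N-1,k-1}\s\mathbf{X}_{N-1,k}$), so together they define a separately holomorphic function on the two-fold cross $\mb{X}(\mathbf{X}_{N-1,k-1},A_N;\Omega,D_N)$, whose first leaf $\mathbf{X}_{N-1,k-1}$ is locally pluriregular (a finite union of products of locally pluriregular sets) and is contained in $\Omega$. The two-fold cross theorem now extends $f$ to $\widehat{\mb{X}}(\mathbf{X}_{N-1,k-1},A_N;\Omega,D_N)$, and the range condition propagates through the slicing and the amalgamation.

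\emph{The crux.}
It remains to show that this two-fold envelope coincides with $\widehat{\mathbf{X}}_{N,k}$, i.e. to prove
$$\boldsymbol{h}^{\star}_{\mathbf{X}_{N-1,k-1},\Omega}(w)=\sum_{j=1}^{N-1}\boldsymbol{h}^{\star}_{A_j,D_j}(z_j)-(k-1),\qquad w=(z_1,\dots,z_{N-1})\in\Omega.$$
The inequality ``$\ge$'' is immediate: $v(w):=\sum_{j=1}^{N-1}\boldsymbol{h}^{\star}_{A_j,D_j}(z_j)-(k-1)$ is plurisubharmonic, satisfies $v\le1$ on $\Omega$ and $v\le0$ on $\mathbf{X}_{N-1,k-1}$, hence is a competitor in the definition of the relative extremal function. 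The reverse inequality — that every $u\in\mc{PSH}(\Omega)$ with $u\le1$ and $u|_{\mathbf{X}_{N-1,k-1}}\le0$ satisfies $u\le v$ — is the heart of the argument, and I expect it to be the main obstacle. I would establish it as a cross-type inequality internal to $\Omega$: freeze all but one coordinate and apply the two-constants theorem to the resulting one-variable plurisubharmonic function along complex lines joining points of $A_j$ to arbitrary points of $D_j$, invoking the definition of $\boldsymbol{h}^{\star}_{A_j,D_j}$ at each of the $N-1$ steps, the set $\mathbf{X}_{N-1,k-1}$ being ``thick'' enough inside $\Omega$ for the descent to close. This inequality, together with the verification that the two preliminary reductions are harmless, is where the real work lies; the rest is bookkeeping.
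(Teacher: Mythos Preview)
The paper does not prove this theorem: it is quoted from \cite{J3} and used as a black box. Your inductive scheme---split off the last variable and reduce to a two-fold cross---is exactly the strategy of \cite{J3}, and the reduction is correctly set up, with one cosmetic slip: the relative extremal function is nonnegative, so the crux identity must read
\[
\boldsymbol{h}^{\star}_{\mathbf{X}_{N-1,k-1},\Omega}(w)=\max\Big\{0,\ \sum_{j=1}^{N-1}\boldsymbol{h}^{\star}_{A_j,D_j}(z_j)-(k-1)\Big\}.
\]
This does not affect the outcome, since the extra constraint $\boldsymbol{h}^{\star}_{A_N,D_N}<1$ it produces is automatic. Your range $2\le k\le N-1$ also omits $k=1$; the same splitting works there with the convention $\mathbf{X}_{N-1,0}:=A_1\times\cdots\times A_{N-1}$.

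The genuine soft spot is your proof of the crux inequality ``$\le$''. The ``freeze one coordinate and apply two-constants'' idea is problematic because $\Omega=\widehat{\mathbf{X}}_{N-1,k}$ is not a product domain: fixing the other coordinates leaves a $z_j$-slice that is only a sublevel set $\{\boldsymbol{h}^{\star}_{A_j,D_j}<c\}\subsetneq D_j$, so the one-variable step does not hand you $\boldsymbol{h}^{\star}_{A_j,D_j}(z_j)$ directly, and the bookkeeping of the constants across $N-1$ such steps is not as innocent as your sketch suggests. For comparison, the present paper proves precisely this identity (in the more general form $1\le k<l\le N$) in Proposition~\ref{envelope in envelope}, under the additional hypotheses that each $A_j$ is compact and locally $L$-regular, by a quite different route: exhaust the open parts of the cross by locally $L$-regular compacta so that the approximating extremal functions are continuous, observe that they are maximal (i.e.\ $(dd^c)^n=0$) off the compact set, check the boundary comparison, and invoke the Monge--Amp\`ere domination principle. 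That argument is short and robust; your sketch would need real work to close, and if you want the full generality claimed in \cite{J3} (merely locally pluriregular $A_j$) you will in any case have to consult that paper for the correct treatment of the key inequality.
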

Example \ref{starting} reveals the need of consideration the more ``flexible'' cross-like objects. We have to emphasize that in the context of the $(N,k)$-crosses the shape of all the $\mc{X}_{\alpha}$'s is the same. Here we shall introduce the $\mathscr{A}$-crosses, where the situation can be completely different.
\begin{df}\rm
Fix a natural number $N\geq 2.$ For any $\varnothing\neq S\s\{1,\ldots,N\}$ we choose some system of multiindices $\alpha(s)^1,\ldots,\alpha(s)^{l_s}\in \mf{I}(N,s),s\in S.$ For $s\in\{1,\ldots, N\}\setminus S$ put $l_s=0.$ In the set $\{\alpha(s)^{r}:s\in S,r=1,\ldots,l_s\}$ consider the lexicographical order and denumarate its elements with respect to this order as $\alpha^1,\ldots,\alpha^{l_1+\ldots+l_N}.$ Build the matrix $\mathscr{A}:=~(\alpha_j^i)_{j=1,\ldots,N,i=1,\ldots,l_1+\ldots+l_N}$ and define the \emph{$\mathscr{A}$-cross} 
\begin{displaymath}
\mathbf{Q}(\mathscr{A})=\mathbf{Q}(\alpha^i_j)=\mb{Q}(\alpha^i_j)((A_j,D_j)_{j=1}^N):=\bigcup_{s\in S}\bigcup_{r=1}^{l_s}\mc{X}_{\alpha(s)^r}.
\end{displaymath}
\indent We say that the sets $\mc{X}_{\alpha(s)^r}$ are \emph{branches} of $\mathbf{Q}(\mathscr{A})$.\\ 
\indent The $\mathscr{A}$-cross is said to be \emph{reduced}, if there is no nontrivial chain (with respect to the lexicographical order) in the set $\{\alpha(s)^r:r=1,\ldots,l_s,s\in S\}$ (i.e. there is no situation where some branch $\mc{X}_{\alpha(s_1)^{r_1}}$ is essentially contained in some another branch $\mc{X}_{\alpha(s_2)^{r_2}}$).
\label{quasicross def}
\end{df}
From now on, without loss of generality, we shall consider only the reduced $\mathscr{A}$-crosses.
\begin{ex}\rm\label{2,3}
For $N=2$ we have the following $\mathscr{A}$-crosses: 
$$
\mathbf{Q}\scriptsize(\!\!\begin{array}{cc}1&0\end{array}\!\!)=D_1\times A_2,$$
$$\mathbf{Q}\scriptsize(\!\!\begin{array}{cc}0&1\end{array}\!\!)=A_1\times D_2,$$
$$\mathbf{Q}\scriptsize\left(\!\!\begin{array}{cc}0&1\\1&0\end{array}\!\!\right)=(A_1\times D_2)\cup(D_1\times A_2)=\mb{X}_{2,1}((A_j,D_j)_{j=1,2}),$$
$$\mathbf{Q}\scriptsize(\!\!\begin{array}{cc}1&1\end{array}\!\!)=D_1\times D_2=\mb{X}_{2,2}((A_j,D_j)_{j=1,2}).$$
For $N=3$ we have the following $\mathscr{A}$-crosses (up to permutations of variables):
$$
\mathbf{Q}\scriptsize(\!\!\begin{array}{ccc}0&1&1\end{array}\!\!)=A_1\times D_2\times D_3,
$$
$$
\mathbf{Q}\scriptsize\left(\!\!\begin{array}{ccc}0&1&1\\1&0&0 \end{array}\!\!\right)=(A_1\times D_2\times D_3)\cup(D_1\times A_2\times A_3)=\mb{X}(A_1,A_2\times A_3;D_1,D_2\times D_3),
$$
\begin{multline*}
\mathbf{Q}\scriptsize\left(\!\!\begin{array}{ccc}0&1&1\\1&0&1 \end{array}\!\!\right)=(A_1\times D_2\times D_3)\cup(D_1\times A_2\times D_3)=\mb{X}_{2,1}((A_j,D_j)_{j=1,2})\times D_3=\\\mb{X}(A_1,A_2\times D_3;D_1,D_2\times D_3),
\end{multline*}
$$
\mathbf{Q}\scriptsize\left(\!\!\begin{array}{ccc}0&1&1\\1&0&1\\1&1&0 \end{array}\!\!\right)=\mb{X}_{3,2}((A_j,D_j)_{j=1,2,3}),
$$
$$
\mathbf{Q}\scriptsize(\!\!\begin{array}{ccc}0&0&1\end{array}\!\!)=(A_1\times A_2\times D_3),
$$
$$
\mathbf{Q}\scriptsize\left(\!\!\begin{array}{ccc}0&0&1\\1&0&0 \end{array}\!\!\right)=(A_1\times A_2\times D_3)\cup(D_1\times A_2\times A_3),
$$
$$
\mathbf{Q}\scriptsize\left(\!\!\begin{array}{ccc}0&0&1\\0&1&0\\1&0&0 \end{array}\!\!\right)=\mb{X}_{3,1}((A_j,D_j)_{j=1,2,3}),
$$
$$
\mathbf{Q}\scriptsize(\!\!\begin{array}{ccc}1&1&1\end{array}\!\!)=(D_1\times D_2\times D_3)=\mb{X}_{3,3}((A_j,D_j)_{j=1,2,3}).
$$
\end{ex}
Observe that in this new notation the set of $(N,k)$-crosses equals the set of those $\mathscr{A}$-crosses, for which the rows of the defining matrix $\mathscr{A}$ are exactly the elements of $\mf{I}(N,k)$ ordered lexicographically. The notion of separate holomorphicity is completely in the spirit of $(N,k)$-crosses (see \cite{J3}).
\begin{df}\rm\label{sepholo}
For a relatively closed set $M\s \mathbf{Q}(\alpha^i_j)$ (we allow $M=\varnothing$ here) we say that a function $f:\mathbf{Q}(\alpha^i_j)\setminus M\rightarrow\mb{C}$ is \emph{separately holomorphic on $\mathbf{Q}(\alpha^i_j)\setminus M$} (written $f\in\mc{O}_s(\mathbf{Q}(\alpha^i_j)\setminus M)$) if for every $s\in\{1,\ldots, N\}$ with nonzero $l_s,$ every $j\in\{1,\ldots, l_s\}$, and for every $a\in A_{\alpha(s)^j}$ the function
\begin{displaymath}
D_{\alpha(s)^j}\setminus M_a\ni z\mapsto f(\boldsymbol{i}_{a,\alpha(s)^j}(z))
\end{displaymath} is holomorphic, where
$$
M_a:=\{z\in D_{\alpha(s)^j}:\boldsymbol{i}_{a,\alpha(s)^j}(z)\in M\}
$$
is the \emph{fiber} of $M$ at $a$.
\end{df}
As it was pointed out in the Introduction, we ask whether any separately holomorphic function on an $\mathscr{A}$-cross $\mathbf{Q}$ can be extended holomorphically to some open neighborhood of $\mathbf{Q}$. The second question is, whether we can effectively describe the envelope of holomorphy of an $\mathscr{A}$-cross. Note that usually $\mathscr{A}$-crosses are not open. However, if any separately holomorphic function on an $\mathscr{A}$-cross $\mathbf{Q}$ extends holomorphically to some open neighborhood of $\mathbf{Q},$ we may speak of envelope of holomorphy of that neighborhood. Therefore, the problem of finding the \emph{envelope of holomorphy of a given $\mathscr{A}$-cross} should not lead to confusions.\\
\indent As the following example shows, we need to avoid some ``pathological'' situations.
\begin{ex}
\rm Take 
\begin{displaymath}
\mathbf{Q}:=\mathbf{Q}\scriptsize\left(\!\!\begin{array}{ccc}0&0&1\\1&0&0 \end{array}\!\!\right)=(A_1\times A_2\times D_3)\cup(D_1\times A_2\times A_3),
\end{displaymath}
 Consider on $\mathbf{Q}$ the function $f(z_1,z_2,z_3):=g(z_2)h(z_1,z_3),$ where $h\neq 0$ is some separately holomorphic function on  $\mathbf{X}:=\mb{X}(A_1,A_3;D_1,D_3)$ (possibly constant) and $g$ is some ``wild'' function on $A_2.$ Then $f$ is separately holomorphic on $\mathbf{Q}.$ It may be, however, very far away from being holomorphically extendible (or just holomorphic, in the case where $A_j=D_j,j=1,2,3$). 
\end{ex}
\begin{ex}\rm
Consider
$$
\mathbf{Q}:=\mathbf{Q}(\!\!\begin{array}{ccc}1&\cdots&1\end{array}\!\!)
=\mb{X}_{N,N}((A_j,D_j)_{j=1}^N)=D_1\times\ldots\times D_N.
$$
Then every separately holomorphic function on $\mathbf{Q}$ is authomatically holomorphic on $\mathbf{Q}.$ If in addition all $D_j$'s are domains of holomorphy, then so is~$\mathbf{Q}.$ 
\end{ex}
The following theorem shows that if we exclude the situation where there exists some $k\in\{1,\ldots,N\}$ such that for any $s\in\{1,\ldots,N\}$ with nonzero $l_s$ and for any $\alpha(s)^r,r=1,\ldots,l_s$ we have $\alpha(s)^r_k=0$ (in other words, we assume the inclusion $\mb{X}_{N,1}((A_j,D_j)_{j=1}^N)\s\mb{Q}(\alpha^i_j)((A_j,D_j)_{j=1}^N)$), then we may describe the envelope of holomorphy of an $\mathscr{A}$-cross as the envelope of holomorphy of some corresponding $2$-fold cross.
\begin{theorem}
Let $D_j$ be a Riemann domain of holomorphy over $\mb{C}^{n_j}$ and let $A_j\s D_j$ be locally pluriregular, $j=1,\ldots, N.$ Put $\mathbf{Q}:=\mb{Q}(\alpha^i_j)((A_j,D_j)_{j=1}^N)$. Assume that $\mb{X}_{N,1}((A_j,D_j)_{j=1}^N)\s\mathbf{Q}$. 
Then there exist an $m_0\in\{1,\ldots,N\},$ a domain of holomorphy $G\s D_1\times\ldots\times D_{m_0-1}\times D_{m_0+1}\times\ldots\times D_N,$ a locally pluriregular subset $B\s G$, and a $2$-fold classical cross $\mathbf{X}=\mb{X}(A_{m_0},B;D_{m_0},G)$ containing $\tau^{-1}(\mathbf{Q}),$ where $\tau$ is a mapping which sends a point $$(z_{m_0},z_1,\ldots,z_{m_0-1},z_{m_0+1},\ldots,z_N)\in D_{m_0}\times D_1\times\ldots\times D_{m_0-1}\times D_{m_0+1}\times\ldots\times D_N$$ to the point $(z_1,\ldots,z_N)\in D_1\times\ldots\times D_N,$ such that for every function $f\in\mc{F}:=\mc{O}_s(\mathbf{Q})$ there exists a unique function $\widehat{f}\in\mc{O}(\widehat{\mathbf{X}})$ with $\widehat{f}=f\circ\tau$ on $\tau^{-1}(\mathbf{Q}),$ i.e. $\widehat{\mathbf{Q}}:=\tau(\widehat{\mathbf{X}})$ is the envelope of holomorphy of $\mathbf{Q}.$
\label{quasi}
\end{theorem}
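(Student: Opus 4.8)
The plan is to argue by induction on $N$, the role of the classical $2$-fold cross being played, at the last step, by Theorem~\ref{cross theorem}. The base case $N=2$ is essentially Theorem~\ref{cross theorem} itself: under $\mb{X}_{2,1}((A_j,D_j)_{j=1,2})\s\mathbf{Q}$ the only reduced $\mathscr{A}$-crosses are $\mathbf{Q}=\mb{X}_{2,1}((A_j,D_j)_{j=1,2})$ (take $m_0=1$, $G=D_2$, $B=A_2$, $\tau=\mathrm{id}$) and $\mathbf{Q}=D_1\times D_2$ (take $m_0=1$, $G=B=D_2$, and note that $\mc{O}_s(D_1\times D_2)=\mc{O}(D_1\times D_2)$, so Theorem~\ref{cross theorem} applied to $\mb{X}(A_1,D_2;D_1,D_2)=D_1\times D_2$ does the job). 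For the inductive step fix any $m_0\in\{1,\dots,N\}$; by hypothesis the $m_0$-th column of $\mathscr{A}$ is not identically $0$, and that is all that is used (indeed every $m_0$ works). Let $\tau$ be the reordering map of the statement, let $\mathbf{Q}^{\mathrm{red}}:=\bigcup_{\alpha}\prod_{j\ne m_0}\mc{X}_{\alpha,j}$ (union over all branches $\alpha$ of $\mathbf{Q}$) be the $\mathscr{A}$-cross in the variables $j\ne m_0$ obtained by deleting the $m_0$-th column of $\mathscr{A}$, and let $B:=\bigcup_{\alpha:\alpha_{m_0}=1}\prod_{j\ne m_0}\mc{X}_{\alpha,j}$. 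Since every column of $\mathscr{A}$ carries a $1$ we have $\mb{X}_{N-1,1}((A_j,D_j)_{j\ne m_0})\s\mathbf{Q}^{\mathrm{red}}$, so the inductive hypothesis gives a domain of holomorphy $G:=\widehat{\mathbf{Q}^{\mathrm{red}}}\s\prod_{j\ne m_0}D_j$ with $\mathbf{Q}^{\mathrm{red}}\s G$ and such that every $g\in\mc{O}_s(\mathbf{Q}^{\mathrm{red}})$ extends uniquely to some $\widehat g\in\mc{O}(G)$. Note $B$ is nonempty, $B\s\mathbf{Q}^{\mathrm{red}}\s G$, and --- this is a point needing a separate lemma --- $B$ is locally pluriregular, being a finite union of products of locally pluriregular sets. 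Put $\mathbf{X}:=\mb{X}(A_{m_0},B;D_{m_0},G)$.

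The inclusion $\tau^{-1}(\mathbf{Q})\s\mathbf{X}$ is immediate: for a branch of $\mathbf{Q}$, $\tau^{-1}(\mc{X}_\alpha)=\mc{X}_{\alpha,m_0}\times\prod_{j\ne m_0}\mc{X}_{\alpha,j}$ lies in $A_{m_0}\times G$ if $\alpha_{m_0}=0$ and in $D_{m_0}\times B$ if $\alpha_{m_0}=1$.

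Next, given $f\in\mc{F}=\mc{O}_s(\mathbf{Q})$ I construct $\widetilde f\in\mc{O}_s(\mathbf{X})$ restricting to $f\circ\tau$ on $\tau^{-1}(\mathbf{Q})$. For $a\in A_{m_0}$, the map $w\mapsto f(\tau(a,w))$ is well defined on $\mathbf{Q}^{\mathrm{red}}$, and a routine bookkeeping with the maps $\boldsymbol{i}_{\cdot,\alpha}$ witnessing $f\in\mc{O}_s(\mathbf{Q})$ --- restricting them to $\{z_{m_0}=a\}$ for branches with $\alpha_{m_0}=1$, keeping them unchanged for branches with $\alpha_{m_0}=0$ --- shows that it belongs to $\mc{O}_s(\mathbf{Q}^{\mathrm{red}})$; set $\widetilde f(a,\cdot):=\widehat{f(\tau(a,\cdot))}\in\mc{O}(G)$. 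For $b\in B$, choose a branch $\alpha_0$ with $(\alpha_0)_{m_0}=1$ and $b\in\prod_{j\ne m_0}\mc{X}_{\alpha_0,j}$; then $z_{m_0}\mapsto f(\tau(z_{m_0},b))$ is a restriction of the witnessing map of $f$ attached to $\mc{X}_{\alpha_0}$, hence holomorphic on $D_{m_0}$, and we put $\widetilde f(z_{m_0},b):=f(\tau(z_{m_0},b))$. On the overlap $A_{m_0}\times B$ the two definitions agree, because $\widehat{f(\tau(a,\cdot))}$ reduces to $f(\tau(a,\cdot))$ on $\mathbf{Q}^{\mathrm{red}}\supseteq B$. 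Thus $\widetilde f\in\mc{O}_s(\mathbf{X})$, and Theorem~\ref{cross theorem} applied to $\mathbf{X}$ yields $\widehat f\in\mc{O}(\widehat{\mathbf{X}})$ with $\widehat f=\widetilde f$ on $\mathbf{X}$, in particular $\widehat f=f\circ\tau$ on $\tau^{-1}(\mathbf{Q})$. Uniqueness of $\widehat f$ holds since $\tau^{-1}(\mathbf{Q})\supseteq\tau^{-1}(\mb{X}_{N,1}((A_j,D_j)_{j=1}^N))\supseteq D_{m_0}\times\prod_{j\ne m_0}A_j$, a nonpluripolar subset of the connected domain $\widehat{\mathbf{X}}$ (here one also uses $\prod_{j\ne m_0}A_j\s B$ to see that this product lies in $\widehat{\mathbf{X}}$). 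Conversely, every $g\in\mc{O}(\widehat{\mathbf{X}})$ arises as some such $\widehat f$: transporting $g|_{\tau^{-1}(\mathbf{Q})}$ back through $\tau$ gives an element of $\mc{F}$, and uniqueness forces its $\widehat f$ to equal $g$. Hence $\widehat{\mathbf{Q}}:=\tau(\widehat{\mathbf{X}})$ is exactly the envelope of holomorphy of $\mathbf{Q}$, which closes the induction.

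Apart from the index bookkeeping, the steps that genuinely require care are: (i) the lemma that an $\mathscr{A}$-cross of locally pluriregular sets is again locally pluriregular, so that $B$ is admissible as the $A$-set of a classical cross; (ii) the fact --- already implicit in the inclusion $\mb{X}_{N,k}\s\widehat{\mb{X}}_{N,k}$ quoted in the text --- that $\boldsymbol{h}^{\star}_{A,D}<1$ on $D$ whenever $A$ is locally pluriregular in a Riemann domain of holomorphy $D$, needed both to place $\mathbf{X}$ inside $\widehat{\mathbf{X}}$ and, in the degenerate case $B=G$ (for instance $\mathbf{Q}=D_1\times\dots\times D_N$), to identify $\widehat{\mathbf{X}}$ correctly; and (iii) the connectedness of $\widehat{\mathbf{X}}$, which underlies the uniqueness argument. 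I expect (i) to be the main technical obstacle, though it should follow from the standard behaviour of local pluriregularity under finite unions and products.
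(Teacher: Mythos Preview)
Your argument is correct and follows the same inductive skeleton as the paper: reduce to a $2$-fold cross by fixing one coordinate index $m_0$, apply the inductive hypothesis to the $\mathscr{A}$-cross in the remaining variables to obtain $G=\widehat{\mathbf{Q}^{\mathrm{red}}}$, take $B$ to be the union of branches with $\alpha_{m_0}=1$ (column $m_0$ deleted), and then invoke Theorem~\ref{cross theorem}. This is exactly the paper's Case~2 (with $m_0=1$ there). The difference is organizational: the paper first singles out Case~1, where some column of $\mathscr{A}$ is identically~$1$, factors $\mathbf{Q}=\mathbf{Q}'\times D_{k_0}$, and then needs Terada's theorem to push the inductively obtained extension across the extra $D_{k_0}$-factor. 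Your uniform treatment absorbs this case into the same $2$-fold reduction (if $m_0\in\mathfrak{K}$ you simply get $B=\mathbf{Q}^{\mathrm{red}}$), so Terada's theorem never enters. That is a genuine, if modest, simplification.

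Your flagged points (i)--(iii) are not real obstacles. For (i), a finite union of locally pluriregular sets is locally pluriregular (trivially, since $\boldsymbol{h}^{\star}_{S_1\cup S_2,U}\le\boldsymbol{h}^{\star}_{S_1,U}$), and products of locally pluriregular sets are locally pluriregular by the product formula for relative extremal functions. For (ii), $\boldsymbol{h}^{\star}_{B,G}$ is plurisubharmonic and $\le 1$ on the connected domain $G$; if it hit~$1$ it would be identically~$1$ by the maximum principle, contradicting $\boldsymbol{h}^{\star}_{B,G}|_B=0$. For (iii), connectedness of $\widehat{\mathbf{X}}$ is standard for envelopes of $2$-fold crosses and is also noted (with a reference) in the Remark following the paper's proof.
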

\begin{proof}
\indent We start the induction on $N.$ For $N=2$ the conclusion is obvious (see Example \ref{2,3}). Suppose now that the conclusion holds true for $N-1$ with some $N\geq 3$ and consider $\mathbf{Q}(\alpha^i_j).$\\
\indent Let $\mathfrak{K}$ be the set of those $k\in\{1,\ldots,N\}$ such that for any $s\in\{1,\ldots,N\}$ with non-zero $l_s$ and for any $\alpha(s)^r,r=1,\ldots,l_s$ we have $\alpha(s)^r_{k}=1.$ Two cases have to be considered.\\
\indent\textit{Case 1.} There is some $k_0\in\mathfrak{K}$.\\
\indent Observe that if $\mathfrak{K}=\{1,\ldots, N\},$ then $\mathbf{Q}=D_1\times\ldots\times D_N$ and we take $m_0=1,G=B=D_2\times\ldots\times D_N,\mathbf{X}=\widehat{\mathbf{X}}=D_1\times\ldots\times D_N.$\\
\indent Therefore, we may assume that there is some $m_0\in\{1,\ldots,N\}\setminus\mathfrak{K}.$\\
\indent To simplify the notation we assume that $k_0=N\in\mathfrak{K}$ (the proof in~the~other cases goes along the same lines). Then $$\mathbf{Q}(\alpha^i_j)=\mathbf{Q}'\times D_N,$$ 
where $\mathbf{Q}'$ is some ``smaller'' $\mathscr{A}$-cross, with the defining matrix of dimension $(l'_1+\ldots+l'_{(N-1)})\times(N-1)$.
By the inductive assumption and Terada's theorem (\cite{T1}), for every function from $\mc{F}$, the function $f\circ\tau$ extends holomorphically to $\widehat{\mb{X}}(A_{m_0},B';D_{m_0},G')\times D_N$ with some domain of holomorphy $G'\s D_1\times\ldots\times D_{m_0-1}\times D_{m_0+1}\times\ldots\times D_{N-1}$ and locally pluriregular set $B'\s G'.$ It is left to observe that 
$$\widehat{\mb{X}}(A_{m_0},B';D_{m_0},G')\times D_N=\widehat{\mb{X}}(A_{m_0},B'\times D_N;D_{m_0},G'\times D_N).$$
\indent\textit{Case 2.} The set $\mathfrak{K}$ is empty.\\
\indent We use the following notation: for any $s$ with non-zero $l_s$ and any number $r\in\{1,\ldots, l_s\}$ write the multi-index $\alpha(s)^r$ as $(\alpha(s)^r_1,\alpha(s)^r_0),$ where $\alpha(s)^r_0$ is~a~suitable multi-index from $\{0,1\}^{N-1}.$\\
\indent Fix a point $a_1\in A_1$ and consider the family of functions $$\mc{F}':=\{f(a_1,\cdot): f\in\mc{F}\}.$$ 
Observe that the functions from $\mc{F}'$ are defined on an $\mathscr{A}$-cross $\mathbf{Q}'({\alpha}'^i_j),$ where the defining matrix $({\alpha}'^i_j)_{j=1,\ldots,N-1,i=1,\ldots,l'_1+\ldots+l'_{N-1}}$ comes from the set of all $\alpha(s)^r_0$'s after ordering its elements with respect to the lexicographical order. The $\mathscr{A}$-cross $\mathbf{Q}'({\alpha}'^i_j),$ however, may not be reduced. Nevertheless, the cancelling all the branches of it, which are contained in another ones, does not change anything here, so we may assume without loss of generality that $\mathbf{Q}'({\alpha}'^i_j)$ is reduced. By the inductive assumption (observe that all assumptions of the theorem are now satisfied), for any function $g\in \mc{F}'$, the function $g\circ\sigma$ extends holomorphically to $\widehat{\mathbf{X}}'=\widehat{\mb{X}}'(A_{m_0'},B';D_{m_0'},G')$ with some domain of holomorphy $G'\s D_2\times\ldots\times D_{m_0'-1}\times D_{m_0'+1}\times\ldots\times D_N$ and locally pluriregular set $B'\s G',$
where 
$$\sigma: D_{m_0'}\times D_2\times\ldots\times D_{m_0'-1}\times D_{m_0'+1}\times\ldots\times D_N\rightarrow D_2\times\ldots\times D_N$$ 
sends a point $(z_{m_0'},z_2,\ldots,z_{m_0'-1},z_{m_0'+1},\ldots,z_N)$ to the point $(z_2,\ldots,z_N).$\\ 
\indent Furthermore, there exists a minimal number $n_0\in\{1,\ldots,l_1+\ldots+l_N\}$ such that for every $n'\geq n_0$ there is $\alpha^{n'}_1=1$ while for $n''<n_0$ we have $\alpha^{''}_1=0.$ Fix a point $z'\in\mathbf{Q}'':=\mathbf{Q}''(\alpha ''^i_j),$ where the matrix $(\alpha ''^i_j)_{j=1,\ldots,N-1,i=1,\ldots,l''_1+\ldots+l''_{N-1}}$ comes from the set $\{\alpha(s)^{n'}_0:n'\geq n,l_s\neq 0\}$ after ordering its elements with respect to the lexicographical order. Also, similarly as above, we may assume that $\mathbf{Q}''$ is reduced. Consider the family of functions $\mc{F}'':=\{f(\cdot,z'):f\in\mc{F}\}.$ Then every function from $\mc{F}''$ is holomorphic on $D_1.$\\
\indent Observe that $\mathbf{Q}''\s\sigma(\widehat{\mathbf{X}}'),$ since by the construction, for any row $\alpha ''^i$ of~the~matrix $(\alpha ''^i_j)$ there exists some row $\alpha '^k$ of the matrix $(\alpha '^i_j)$ which is subsequent to $\alpha ''^i$ with respect to the lexicographical order (i.e. any branch of~$\mathbf{Q}''$ is contained in some branch of $\mathbf{Q}'({\alpha}'^i_j)$). 
Now the conclusion holds true because of Theorem \ref{cross theorem} applied to the $2$-fold cross $(A_1\times\sigma(\widehat{\mathbf{X}}'))\cup(D_1\times\mathbf{Q}'')$ and $m_0=1,G=\sigma(\widehat{\mathbf{X}}'),B=\mathbf{Q}'',\tau=id$ are good for our purpose.
\end{proof}
\begin{rem}
Observe that in the above theorem (and its proof) the number $m_0$ might not be unique. However, since $\widehat{\mathbf{X}}$ is the envelope of holomorphy of $\mathbf{Q}$, the result must be the same, no matter which $m_0$ we distinct.\\
\indent To see this, observe that if $\widehat{\mathbf{X}}_1,\widehat{\mathbf{X}}_2$ are two envelopes of $\mathbf{Q}$ constructed for two different $m_0$'s, then for any function $f\in\mc{O}(\widehat{\mathbf{X}}_1)$ there exists a function $\widetilde{f}\in\mc{O}(\widehat{\mathbf{X}}_2)$ with $\widetilde{f}=f$ on the connected component of $\widehat{\mathbf{X}}_1\cap\widehat{\mathbf{X}}_1$ containing $\mathbf{Q}$ (notice that $\mathbf{Q}$ is connected - this follows from the proof of Theorem \ref{quasi} and main result from \cite{L1}). 
\end{rem}
\begin{cor}Let $s,N_1,\ldots, N_s\in\mb{N},s,N_1,\ldots, N_s\geq 2,$
let $D_{d_j}$ be a~Riemann domain of holomorphy over $\mb{C}^{n_{d_j}}$ and $A_{d_j}\s D_{d_j}$ be locally pluriregular, $j=1,\ldots,N_d,d=1,\ldots,s$. Let $\mathbf{Q}=\mathbf{Q}_1\times\ldots\times\mathbf{Q}_s,$ where $\mathbf{Q}_d$ is an $\mathscr{A}$-cross spread over $D_{d_1}\times\ldots\times D_{d_{N_d}},d=1,\ldots,s.$ Assume that $\mb{X}_{N_1+\ldots+N_s,1}((A_j,D_j)_{j=1}^{N_1+\ldots+N_s})\s\mathbf{Q}.$ Then the envelope of holomorphy $\widehat{\mathbf{Q}}$ of $\mathbf{Q}$ equals $\widehat{\mathbb{X}}_{s,1}((\mathbf{Q_j},\widehat{\mathbf{Q}}_j)_{j=1}^s).$
\end{cor}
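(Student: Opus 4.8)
The plan is to realise $\mathbf{Q}$ as living inside the classical $s$-fold cross $\mb{X}:=\mb{X}_{s,1}((\mathbf{Q}_j,\widehat{\mathbf{Q}}_j)_{j=1}^s)$ and to pull the conclusion back from Theorem \ref{cross theorem}. First I would note that the hypothesis descends to each factor: a branch of $\mb{X}_{N_1+\ldots+N_s,1}((A_j,D_j))$ whose single ``free'' coordinate sits in the $d$-th block is, after the product reordering, the product of a branch of $\mb{X}_{N_d,1}((A_{d_j},D_{d_j})_{j=1}^{N_d})$ with the sets $\prod_j A_{e_j}$ ($e\neq d$); since $\mathbf{Q}=\mathbf{Q}_1\times\ldots\times\mathbf{Q}_s$ is a Cartesian product, the inclusion into $\mathbf{Q}$ forces that branch into $\mathbf{Q}_d$, so letting the free coordinate range over the $d$-th block yields $\mb{X}_{N_d,1}((A_{d_j},D_{d_j})_{j=1}^{N_d})\s\mathbf{Q}_d$ for every $d$. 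Thus Theorem \ref{quasi} applies to each $\mathbf{Q}_d$: its envelope of holomorphy $\widehat{\mathbf{Q}}_d$ exists, is a Riemann domain of holomorphy, and contains $\mathbf{Q}_d$; moreover $\mathbf{Q}_d$ is locally pluriregular (in particular non-pluripolar) in $\widehat{\mathbf{Q}}_d$ — at a point $a$ of a branch $\mc{X}_{\beta}$ of $\mathbf{Q}_d$ one has $\boldsymbol{h}^{\star}_{\mathbf{Q}_d,\widehat{\mathbf{Q}}_d}(a)\le\boldsymbol{h}^{\star}_{\mc{X}_{\beta},\widehat{\mathbf{Q}}_d}(a)=0$, the last equality being the statement that the relative extremal function of a Cartesian product vanishes wherever that of each factor does (the $D_{d_j}$-factors contribute $0$ trivially, the $A_{d_j}$-factors by local pluriregularity). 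Since each $\widehat{\mathbf{Q}}_j$ is connected, $\boldsymbol{h}^{\star}_{\mathbf{Q}_j,\widehat{\mathbf{Q}}_j}<1$ on $\widehat{\mathbf{Q}}_j$, so $\mathbf{Q}\s\mb{X}\s\widehat{\mb{X}}:=\widehat{\mb{X}}_{s,1}((\mathbf{Q}_j,\widehat{\mathbf{Q}}_j)_{j=1}^s)$.

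Next I would move separately holomorphic functions between $\mathbf{Q}$ and $\mb{X}$. Given $f\in\mc{F}=\mc{O}_s(\mathbf{Q})$, fix $d$ and $b=(b_e)_{e\neq d}$ with $b_e\in\mathbf{Q}_e$; each $b_e$ lies in some branch of $\mathbf{Q}_e$, and the product of one branch from each factor is again a branch of $\mathbf{Q}$ (the defining multi-indices concatenate), so the slice $z\mapsto f(b_1,\ldots,b_{d-1},z,b_{d+1},\ldots,b_s)$ is separately holomorphic on $\mathbf{Q}_d$ and extends, by Theorem \ref{quasi}, to $\widehat{F}_{d,b}\in\mc{O}(\widehat{\mathbf{Q}}_d)$. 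Setting $\widetilde{f}(w_1,\ldots,w_s):=\widehat{F}_{d,(w_e)_{e\neq d}}(w_d)$ on the $d$-th arm of $\mb{X}$ gives a well-defined element $\widetilde{f}\in\mc{O}_s(\mb{X})$ with $\widetilde{f}|_{\mathbf{Q}}=f$ (the arms meet exactly along $\mathbf{Q}$, where all the definitions reduce to $f$). Conversely, if $g\in\mc{O}_s(\mb{X})$ then on each branch of $\mathbf{Q}$ the separate holomorphicity of $g$ in the $s$ groups of variables together with the classical Hartogs theorem (\cite{H1}) forces joint holomorphicity, so $g|_{\mathbf{Q}}\in\mc{F}$; and since every $\mathbf{Q}_d$ is non-pluripolar in the connected $\widehat{\mathbf{Q}}_d$, all these extensions are unique, which makes $f\mapsto\widetilde{f}$ and $g\mapsto g|_{\mathbf{Q}}$ mutually inverse.

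Finally I would conclude. By Theorem \ref{cross theorem} each $\widetilde{f}$ extends to $\mc{O}(\widehat{\mb{X}})$, hence every $f\in\mc{F}$ extends to the domain of holomorphy $\widehat{\mb{X}}\supseteq\mathbf{Q}$, so $\widehat{\mb{X}}$ maps into the envelope of holomorphy $\widehat{\mathbf{Q}}$ over $\mathbf{Q}$. Conversely, $\widehat{\mb{X}}$ is the envelope of holomorphy of the cross $\mb{X}$ (the classical theory behind Theorem \ref{cross theorem}, available because the axes $\mathbf{Q}_j$ are locally pluriregular in the domains of holomorphy $\widehat{\mathbf{Q}}_j$); $\widehat{\mathbf{Q}}$ contains $\mb{X}$ (each $f\in\mc{F}$ already extends along every slice $\{b\}\times\widehat{\mathbf{Q}}_d$) and every $g\in\mc{O}_s(\mb{X})$ extends to $\widehat{\mathbf{Q}}$ (extend $g|_{\mathbf{Q}}$ and identify on the slices using non-pluripolarity), so $\widehat{\mathbf{Q}}$ maps into $\widehat{\mb{X}}$ over $\mathbf{Q}$. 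Because $\mathbf{Q}$ is connected (by the proof of Theorem \ref{quasi} and the main result of \cite{L1}), these two maps are mutually inverse, whence $\widehat{\mathbf{Q}}=\widehat{\mb{X}}=\widehat{\mb{X}}_{s,1}((\mathbf{Q}_j,\widehat{\mathbf{Q}}_j)_{j=1}^s)$, as asserted.

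The point I expect to cost the most work is the combinatorial bookkeeping that makes the slicing and gluing legitimate — checking that a ``product of branches'' is a branch of $\mathbf{Q}$ carrying separate holomorphicity in exactly the right group of variables, so that Theorem \ref{quasi} can be applied to each slice and Theorem \ref{cross theorem} to the glued function $\widetilde{f}$ — together with pinning down that $\mathbf{Q}_d$ is locally pluriregular, hence non-pluripolar, in $\widehat{\mathbf{Q}}_d$, which is exactly what lets $\widehat{\mb{X}}_{s,1}((\mathbf{Q}_j,\widehat{\mathbf{Q}}_j))$ contain $\mathbf{Q}$ and what makes the uniqueness-of-continuation steps on the slices go through.
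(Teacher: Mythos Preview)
Your approach is essentially the same as the paper's: slice $f\in\mc{O}_s(\mathbf{Q})$ along the $d$-th block, extend each slice via Theorem \ref{quasi} to $\widehat{\mathbf{Q}}_d$, read the result as an element of $\mc{O}_s(\mb{X}_{s,1}((\mathbf{Q}_j,\widehat{\mathbf{Q}}_j)_{j=1}^s))$, and finish with Theorem \ref{cross theorem}. The paper's proof is five lines and stops there; you have supplied the bookkeeping it leaves implicit --- the descent of the hypothesis $\mb{X}_{N_1+\ldots+N_s,1}\s\mathbf{Q}$ to each factor, the local pluriregularity of $\mathbf{Q}_d$ in $\widehat{\mathbf{Q}}_d$, the fact that a product of branches is a branch so that slicing is legitimate, and the identification of $\widehat{\mb{X}}$ with the envelope via the domain-of-holomorphy property --- all of which are correct. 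Your converse paragraph (showing $g\mapsto g|_{\mathbf{Q}}$ inverts $f\mapsto\widetilde{f}$ and that $\widehat{\mathbf{Q}}$ maps back into $\widehat{\mb{X}}$) is more than the paper writes down, but it is the honest way to justify the word ``envelope'' rather than merely ``extension domain''.
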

\begin{proof}
Without loss of generality we may assume that $\mathbf{Q}_j$ is reduced, $j=1,\ldots,s.$\\
\indent Fix an $f\in\mc{F}:=\mc{O}_s(\mathbf{Q}).$ For any $j\in\{1,\ldots,s\}$ and for any point $$(z_1,\ldots,z_{j-1},z_{j+1},\ldots,z_s)\in\mathbf{Q}_1\times\ldots\times\mathbf{Q}_{j-1}\times\mathbf{Q}_{j+1}\times\ldots\times\mathbf{Q}_s$$
the function $f(z_1,\ldots,z_{j-1},\cdot,z_{j+1},\ldots,z_s)$ extends holomorphically to $\widehat{\mathbf{Q}}_j.$ Therefore, any function from $\mc{F}$ may be treated as a separately holomorphic function on ${\mathbb{X}}_{s,1}((\mathbf{Q_j},\widehat{\mathbf{Q}}_j)_{j=1}^s)$, which finishes the proof. 
\end{proof}
\begin{rem}\label{global}
Let $\mathbf{Q}$ be as in Theorem \ref{quasi} and let $\widehat{\mathbf{X}}$ be constructed via Theorem \ref{quasi}. We may consider the extension theorem for $\mathbf{Q}$ with analytic singularities given by the analytic subset $G$ of positive pure codimension of~$\widehat{\mathbf{X}}.$ Using the analogous argument to the one given in the proof of~Theorem 2.12 from \cite{L1} we can state and prove the extension theorem for $\mathscr{A}$-crosses with analytic singularities, parallel to Theorem 2.12 (case $F=\varnothing$) therein. 
\end{rem}
Note that the set $G$ in the above Remark is an analytic subset of $\widehat{\mathbf{X}}.$ The~question is, whether we can prove a ``local'' version of the extension theorem, when $G$ is  an analytic subset of~an~open neighborhood $U\s\widehat{\mathbf{X}}$ of $\mathbf{Q}$ (not necessarily $U=\widehat{\mathbf{X}})$. Concerning this matter, we have the following
\begin{theorem}\label{quasising}
Let $D_j$ be a Riemann domain of holomorphy over $\mb{C}^{n_j}$ and $A_j\s D_j$ be locally pluriregular, compact, and holomorphically convex, $j=1,\ldots, N.$ Put $\mathbf{Q}:=\mb{Q}(\alpha^i_j)((A_j,D_j)_{j=1}^N)$. Assume that $\mb{X}_{N,1}((A_j,D_j)_{j=1}^N)\s\mathbf{Q}$. Denote by $\widehat{\mathbf{Q}}$ the envelope of holomorphy of $\mathbf{Q}$ (cf. Theorem \ref{quasi}).
Define $M:=\mathbf{Q}\cap G,$ where $G$ is an analytic subset of an open neighborhood $U$ of $\mathbf{Q}$ contained in $\widehat{\mathbf{Q}}$ with {\normalfont\ codim}$G\geq 1$ and let $\mc{F}:=\mc{O}_s(\mathbf{Q}\setminus M).$ Then there exist an analytic set $\widehat{M}\s\widehat{\mathbf{Q}}$ and an open neighborhood $U_0\s U$ of $\mathbf{Q}$ 
such that:\\
\indent$\bullet$ $\widehat{M}\cap U_0\s G,$\\
\indent$\bullet$ for any $f\in\mc{F}$ there exists an $\widehat{f}\in\mc{O}(\widehat{\mathbf{Q}}\setminus\widehat{M})$ such that $\widehat{f}=f$ on $\mathbf{Q}\setminus M,$\\
\indent$\bullet$ $\widehat{M}$ is singular with respect to the family $\{\widehat{f}:f\in\mc{F}\}$ (see \cite{J1}, Chapter 3, Section 4),\\
\indent$\bullet$ $\widehat{f}(\widehat{\mathbf{Q}}\setminus\widehat{M})\s f(\mathbf{Q}\setminus M)$ for any $f\in\mc{F}.$\\
In particular, the extension result holds for the $(N,k)$-crosses.  
\end{theorem}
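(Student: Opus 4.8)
The plan is to repeat the induction on $N$ from the proof of Theorem \ref{quasi}, now carrying the analytic singular set through each step and replacing the cross theorem without singularities by its version with analytic singularities. The engine is a \emph{local} cross theorem with analytic singularities for classical $2$-fold crosses: if $A_1,A_2$ are locally pluriregular, compact and holomorphically convex, $G$ is an analytic set of codimension $\geq 1$ in a neighbourhood $U$ of $\mb{X}:=\mb{X}(A_1,A_2;D_1,D_2)$ inside $\widehat{\mb{X}}$, and $M:=\mb{X}\cap G$, then every $f\in\mc{O}_s(\mb{X}\setminus M)$ extends to some $\widehat f\in\mc{O}(\widehat{\mb{X}}\setminus\widehat M)$ with $\widehat M\s\widehat{\mb{X}}$ analytic, $\widehat M\cap U_0\s G$ on a smaller neighbourhood $U_0$, with $\widehat f(\widehat{\mb{X}}\setminus\widehat M)\s f(\mb{X}\setminus M)$, and with $\widehat M$ singular for $\{\widehat f\}$. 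I would obtain this local statement from the global one (Remark \ref{global} and \cite{L1}, Theorem 2.12, case $F=\varnothing$): extend first off $G$ by the ordinary $2$-fold cross theorem, then, using that near $\mb{X}$ the singular set is the \emph{analytic} set $G$, run a Levi/Hartogs-figure-with-singularities continuation; compactness and holomorphic convexity of $A_1,A_2$ are exactly what make the union of the resulting fibrewise continuations holomorphic off an analytic set and what let one replace $G$ (defined only near $\mb{X}$) by a globally analytic $\widehat M$.

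For $N=2$ this is the assertion when $\mathbf{Q}=\mb{X}_{2,1}$; when $\mathbf{Q}=D_1\times D_2$ one has $\widehat{\mathbf{Q}}=\mathbf{Q}$, $U=\widehat{\mathbf{Q}}$, and by the Hartogs theorem with thin exceptional set (the case $A_j=D_j$ of the cross theorem with singularities, cf.\ \cite{L1}) every $f\in\mc{O}_s(\mathbf{Q}\setminus M)$ is holomorphic on $\mathbf{Q}\setminus M$, so $\widehat M:=M$, $U_0:=U$ work. For $N\geq 3$, assuming the theorem for $N-1$, I would split as in Theorem \ref{quasi}. In Case 1, writing $\mathbf{Q}=\mathbf{Q}'\times D_N$, $\widehat{\mathbf{Q}}=\widehat{\mathbf{Q}'}\times D_N$: for $z_N\in D_N$ outside an analytic set $E\s D_N$ of codimension $\geq 1$ the fibre $G^{z_N}$ has codimension $\geq 1$ near $\mathbf{Q}'$, so the inductive hypothesis applied to $\{f(\cdot,z_N):f\in\mc{F}\}$ with the fibre singular set $\mathbf{Q}'\cap G^{z_N}$ gives an extension off an analytic set; gluing in $z_N$ by Terada's theorem (\cite{T1}) and a standard holomorphic-family argument produces $\widehat f\in\mc{O}((\widehat{\mathbf{Q}'}\times D_N)\setminus(\widehat M'\times D_N))$, and $\widehat M:=(\widehat M'\times D_N)\cup(\widehat{\mathbf{Q}'}\times E)$ together with an appropriately shrunk $U_0$ does the job. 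In Case 2 ($\mf{K}=\varnothing$), fixing $a_1\in A_1$ the family $\{f(a_1,\cdot):f\in\mc{F}\}$ lives on the reduced $(N-1)$-variable $\mathscr{A}$-cross $\mathbf{Q}'$ off $\mathbf{Q}'\cap G_{a_1}$ and by induction extends to $\widehat{\mathbf{X}}'\setminus\widehat M'$; letting $a_1$ range over the compact holomorphically convex $A_1$ and propagating as in the base case, these assemble into a function holomorphic on $(A_1\times\sigma(\widehat{\mathbf{X}}'))\setminus\widehat M_1$ with $\widehat M_1$ analytic. Since $\mathbf{Q}''\s\sigma(\widehat{\mathbf{X}}')$ and each $f(\cdot,z')$, $z'\in\mathbf{Q}''$, is holomorphic on $D_1$ off a fibre of the singular set, one finishes by applying the local $2$-fold cross theorem with singularities to $(A_1\times\sigma(\widehat{\mathbf{X}}'))\cup(D_1\times\mathbf{Q}'')$ with $m_0=1$ and $\tau=\mathrm{id}$.

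The range inclusion $\widehat f(\widehat{\mathbf{Q}}\setminus\widehat M)\s f(\mathbf{Q}\setminus M)$ propagates at each step from Theorem \ref{cross theorem} and its singular analogues, and the singularity of $\widehat M$ for $\{\widehat f:f\in\mc{F}\}$ follows by taking $\widehat M$ minimal, i.e.\ the intersection of all analytic subsets of $\widehat{\mathbf{Q}}$ across which every $\widehat f$ extends holomorphically. The step I expect to be the main obstacle is the gluing in Cases 1 and 2: each slice --- over $z_N\in D_N$ in Case 1, over $a_1\in A_1$ in Case 2 --- comes with its \emph{own} analytic singular set, and one must show that as the slice parameter varies these fit together into a single holomorphic function defined off a single analytic set, with controlled behaviour near $\mathbf{Q}$; this is precisely where compactness and holomorphic convexity of the $A_j$ are used in an essential way, through the Hartogs-figure-with-singularities propagation, and it is the $\mathscr{A}$-cross analogue of the argument in the proof of \cite{L1}, Theorem 2.12. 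The remaining bookkeeping --- the neighbourhoods $U_0$, the reductions, and the range condition --- is routine.
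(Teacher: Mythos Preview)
Your approach differs fundamentally from the paper's, and the point you yourself flag as ``the main obstacle'' is a genuine gap. The paper does \emph{not} redo the induction on $N$ carrying singular sets through. Instead it reduces the whole theorem to a single claim: there is an open connected neighbourhood $U_0\subset U$ of $\mathbf{Q}$ such that every $f\in\mathcal{F}$ extends holomorphically to $U_0\setminus G$. Once this is known, $\widehat{\mathbf{Q}}$ is the envelope of holomorphy of $U_0$ (by Theorem~\ref{quasi}), and the entire singular part of the conclusion---the existence of a global analytic $\widehat M\subset\widehat{\mathbf{Q}}$ with $\widehat M\cap U_0\subset G$, the extension of each $f$ to $\widehat{\mathbf{Q}}\setminus\widehat M$, singularity for the family, and the range inclusion---follows in one stroke from the Dloussky theorem (\cite{Dl}, \cite{Por}) on propagation of analytic sets to envelopes of holomorphy. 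The construction of $U_0$ is then a direct, non-inductive gluing of open neighbourhoods $U_\alpha^n$ of the branches $\mathcal{X}_\alpha^n$ of the truncated crosses $\mathbf{Q}_n=\mathbb{Q}(\alpha^i_j)((A_j,F_n^j)_{j=1}^N)$, using exhaustions $F_n^j\nearrow D_j$ and the \emph{global} singular extension theorem (Remark~\ref{global}) applied to small $2$-fold crosses with the given $G$ as singular set; compactness and holomorphic convexity of the $A_j$ enter only to produce Stein neighbourhood bases for $A_1\times\cdots\times A_N$.

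Your inductive scheme stumbles precisely at the gluing. In your Case~1, the inductive hypothesis applied to $\{f(\cdot,z_N)\}$ for fixed $z_N$ yields an analytic $\widehat M'_{z_N}\subset\widehat{\mathbf{Q}'}$ and a neighbourhood $U_0^{z_N}$ which both depend on $z_N$; you then write $\widehat M:=(\widehat M'\times D_N)\cup(\widehat{\mathbf{Q}'}\times E)$ as if $\widehat M'$ were a single set, which it is not. Neither Terada's theorem nor any ``standard holomorphic-family argument'' tells you that $\bigcup_{z_N}\widehat M'_{z_N}\times\{z_N\}$ is analytic in $\widehat{\mathbf{Q}'}\times D_N$, nor that the slicewise extensions patch to a single function holomorphic off an analytic set. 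The same issue recurs in Case~2 as $a_1$ varies over $A_1$; invoking ``the $\mathscr{A}$-cross analogue of the argument in \cite{L1}, Theorem~2.12'' is not an argument, since that theorem concerns a global analytic $G\subset\widehat{\mathbf{X}}$, which is exactly what you do not have. The paper sidesteps all of this: by only asking for an extension to $U_0\setminus G$ (where $G$ is the given, already analytic set), it never has to manufacture a global analytic singular set out of slice data---Dloussky's theorem does that once $U_0$ is in hand. That theorem is the key idea you are missing.
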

\begin{proof}It suffices to prove that every function $f\in\mc{F}$ admits a holomorphic extension $\widetilde{f}\in\mc{O}(U_0\setminus G),$ where $U_0$ is some open connected neighborhood of $\mathbf{Q},$ contained in $U.$\\
\indent Indeed, suppose for a moment that we proved the existence of $U_0$. Observe that then $\widehat{\mathbf{Q}}$ is the envelope of holomorphy of $U_0.$ To see this, let $g\in\mc{O}(U_0).$ Then $g|_{\mathbf{Q}}\in\mc{O}_s(\mathbf{Q}).$ In view of Theorem \ref{quasi}, there exists a $\widehat{g}\in\mc{O}(\widehat{\mathbf{Q}})$ such that $\widehat{g}=g$ on $\mathbf{Q},$ which implies that $\widehat{g}=g$ on $U_0$.\\ 
\indent Now it suffices to use the Dloussky theorem (see \cite{Dl}, see also \cite{Por}) and the proof is finished.\\
\indent For any $j\in\{1,\ldots,N\}$ take an exhausting sequence of~relatively compact domains of holomorphy in $D_j$ containing $A_j,$ say $\mc{F}_j=(F^j_n)_{n=1}^{\infty}.$ For any $n\in\mb{N}$ put $\mathbf{Q}_n:=\mb{Q}(\alpha^i_j)((A_j,F^j_n)_{j=1}^N).$ We shall construct the $U_0$ (in three steps).\\
\indent\textit{Step 1.} There exists a $U_c\s U,$ an open neighborhood of $A_1\times\ldots\times A_N$ (not necessarily connected) such that for every $f\in\mc{F}$ there exists an $\widehat{f}\in\mc{O}(U_c\setminus G)$ such that $f=\widehat{f}$ on $(U_c\cap\mathbf{Q})\setminus G.$\\
\indent From the existence of Stein neighborhood bases for the $A_j$'s, we find pseudoconvex sets $A_j\s V_j\s F^j_1, j=1,\ldots, N$ such that $V_1\times\ldots\times V_N\s U.$ We may also choose $V_j$'s so that every connected component of every $V_j$ intersects $A_j$.\\
\indent If now $\widetilde{V_j}$ is any connected component of $V_j$ for $j=1,\ldots,N,$ then we may apply the extension theorem with ``global" singularities for an $\ms{A}$-cross $\mathbf{Q}((\widetilde{V_j})_{j=1}^N):=\mb{Q}(\alpha^i_j)((A_j\cap\widetilde{V_j},\widetilde{V_j})_{j=1}^N)$ (cf. Remark \ref{global}).\\
\indent Summing up all (disjoint) envelopes $\widehat{\mathbf{Q}}((\widetilde{V_j})_{j=1}^N)$, and denoting the sum by $U_c$, we conclude that $A_1\times\ldots\times A_N\s U_c$ and for any $f\in\mc{F}$ there exists an $\widehat{f}\in\mc{O}(U_c\setminus G)$ such that $f=\widehat{f}$ on $(U_c\cap\mathbf{Q})\setminus G.$\\ 
\indent Indeed, the inclusion is apparent. Further, let $a=(a_1,\ldots,a_N)\in(\mc{X}_{\alpha}\cap U_c)\setminus G,$ where $\mc{X}_{\alpha}$ is some branch of $\mathbf{Q}.$ Then there is a unique system $(\widetilde{V}^a_j)_{j=1}^N$ of connected components of $V_j$'s with $a_j\in\widetilde{V}_j.$ Therefore $f(a)=\widehat{f}(a),$ where
$$
\widehat{f}(z):=f_{(\widetilde{V}^z_j)_{j=1}^N}(z),\quad z\in U_c\setminus G,
$$
and $f_{(\widetilde{V}^z_j)_{j=1}^N}\in\mc{O}(\widehat{\mathbf{Q}}((\widetilde{V}^z_j)_{j=1}^N)\setminus G)$ is the extension of the restriction of $f$ to $\mathbf{Q}((\widetilde{V}^z_j)_{j=1}^N)\setminus G.$\\
\indent\textit{Step 2.} For each $n\in\mb{N}$ let $A_j\s W_n^j\s V_j,j=1,\ldots,N$ be pseudoconvex sets such that: 
\begin{enumerate}[\upshape (1)]
\item any connected component of any $W_n^j$ contains some part of $A_j$,
\item $W_n^1\times\ldots\times W_n^N\s U_c$, 
\item for any branch of $\mathbf{Q}_n,$ denoted by $\mc{X}_{\alpha}^n$, the set 
$\widetilde{\mc{X}}^n_{\alpha,1}\times\ldots\times\widetilde{\mc{X}}^n_{\alpha,N}$ is contained in $U$, where
\begin{displaymath}
\widetilde{\mc{X}}^n_{\alpha,j}:=
\begin{cases}
\mc{X}^n_{\alpha,j}=F_n^j,\ &\ {\normalfont{\rm\ if\ }} \alpha_j=1,\\
W_n^j, \ &\ {\normalfont{\rm\ if\ }} \alpha_j=0
\end{cases}
\end{displaymath}
(this is possible since $\mc{X}^n_{\alpha}\s\mc{X}_{\alpha}\s U$ and $\mc{X}^n_{\alpha}\s\s U$, where $\mc{X}_{\alpha}$ is the suitable branch of $\mathbf{Q}$). 
\end{enumerate}
We may also choose $((W_n^j)_{n=1}^{\infty})_{j=1}^N$ so that $W^j_{n+1}\s W_n^j,n\in\mb{N},j=1,\ldots,N.$\\
\indent Fix an $n\in\mb{N}.$ Let $\mc{X}^n_{\alpha}$ be any branch of $\mathbf{Q}_n.$ We shall construct an open (not necessarily connected) neighborhood $U^n_{\alpha}\s U$ of $\mc{X}^n_{\alpha}$ such that for every $f\in\mc{F}$ there exists an $f_{\alpha}^n\in\mc{O}(U^n_{\alpha}\setminus G)$ such that $f=f_{\alpha}^n$ on $(\mathbf{Q}_n\cap U_{\alpha}^n)\setminus G$.\\ 
\indent Without loss of generality we may assume that $\alpha=(\underbrace{0,\ldots,0}_{N-s},\underbrace{1,\ldots,1}_{s})$. Fix an $f\in\mc{F}.$ We know that for any $a\in A_{\alpha}=A_1\times\ldots\times A_{N-s}$, the function $f\circ\boldsymbol{i}_{a,\alpha}=f(a,\cdot)$ is holomorphic on the set $D_{\alpha}\setminus G_a=(D_{N-s+1}\times\ldots\times D_N)\setminus G_a.$ On the other hand, using Step 1, we see that for a fixed $b=(b_{N-s+1},\ldots,b_N)\in A_{N-s+1}\times\ldots\times A_N,$ the function $f(\cdot,b)$ extends to a function $$f_b\in\mc{O}((\widetilde{W_{n+1}^1}\times\ldots\times\widetilde{W^{N-s}_{n+1}})\setminus G_b),$$ 
where for each $j$, $\widetilde{W^j_{n+1}}$ denotes some connected component of $W_{n+1}^j$.\\
\indent Indeed, it is enough to observe that $f(\cdot,b)=\widehat{f}(\cdot,b)$ on the set $$S:=(\mathbf{Q}_b\cap(\widetilde{W_{n+1}^1}\times\ldots\times\widetilde{W^{N-s}_{n+1}}))\setminus G_b.$$ 
To do this, let $x\in S.$ Then $y:=(x,b)\in\mathbf{Q}\setminus G$ and therefore $f(y)=\widehat{f}(y).$\\ 
\indent Consider the $2$-fold cross 
\begin{multline*}
\mathbf{Z}:=((\widetilde{W_{n+1}^1}\times\ldots\times\widetilde{W^{N-s}_{n+1}})\times (A_{N-s+1}\times\ldots\times A_N))\\
\cup(((A_1\cap\widetilde{W^1_{n+1}})\times\ldots\times(A_{N-s}\cap\widetilde{W^{N-s}_{n+1}}))\times F_{n+1}^{N-s+1}\times\ldots\times F^N_{n+1}).
\end{multline*} 
Let $F:\mathbf{Z}\setminus G\rightarrow\mb{C}$ be defined as follows
\begin{displaymath}
F(z,w):=\begin{cases}
f(z,w), & {\rm\ if\ } (z,w)\in\left(\Big(\prod\limits_{j=1}^{N-s}(A_j\cap\widetilde{W^j_{n+1}})\Big)\times\Big(\prod\limits_{k=N-s+1}^NF_{n+1}^k\Big)\right)\setminus G,\\
f_w(z), & {\rm\ if\ } (z,w)\in\left(\Big(\prod\limits_{j=1}^{N-s}\widetilde{W^j_{n+1}}\Big)\times\Big(\prod\limits_{k=N-s+1}^NA_k\Big)\right)\setminus G.
\end{cases}
\end{displaymath}
Observe that $F$ is well-defined and separately holomorphic on $\mathbf{Z}\setminus G.$ Therefore, making use of the extension theorem with ``global" analytic singularities, we conclude that $F$ extends to an $\widetilde{F}\in\mc{O}(\widehat{\mathbf{Z}}\setminus G).$ Now we may glue together all the extensions related to all the systems of connected components of $W^j_{n+1}$'s (observe that all the extension domains are pairwise disjoint) to get a function $\widehat{F}\in\mc{O}(\Omega\setminus G),$ where 
\begin{multline*}
A_1\times\ldots\times A_{N-s}\times F^{N-s+1}_{n+1}\times\ldots\times F_{n+1}^N\s\Omega\s\\
 W^1_{n+1}\times\ldots\times W^{N-s}_{n+1}\times F^{N-s+1}_{n+1}\times\ldots\times F^N_{n+1}\s U
\end{multline*} 
is an open neighborhood of $\mc{X}^n_{\alpha}.$\\ 
\indent Using the fact that $F^j_n\s\s F_{n+1}^j,j=1,\ldots,N,n\in\mb{N}$,
we choose 
$$A_1\s\Gamma^1_n\s W^1_{n+1},\ldots,A_{N-s}\s\Gamma^{N-s}_n\s W^{N-s}_{n+1},$$ open neighborhoods (and yet not necessarily pseudoconvex) with the property that every connected component of every $\Gamma^j_n$ intersects $A_j$, and satisfying 
$$U_{\alpha}^n:=\Gamma_n^1\times\ldots\times\Gamma^{N-s}_n\times F^{N-s+1}_n\times\ldots\times F^{N}_n\s\Omega.$$ 
\indent Consider the restriction of $\widehat{F}$ to $\mc{D}_{\widehat{F}}:=U_{\alpha}^n\setminus G.$ This truncated (holomorphic) function will be denoted by $f_{\alpha}^n$. Observe that in this situation $f=f_{\alpha}^n$ on $\mc{D}_{\widehat{F}}\cap\mathbf{Q}_n.$\\
\indent To see this, let $a\in\mc{D}_{\widehat{F}}\cap\mc{X}_{\beta}^n,$ where $\mc{X}_{\beta}^n$ is any branch of $\mathbf{Q}_n.$ Then 
$$
f\circ\boldsymbol{i}_{a^0_{\beta},\beta}\in\mc{O}(D_{\beta}\setminus G_{a^0_{\beta}}),
$$ 
as well as
$$
f_{\alpha}^n\circ\boldsymbol{i}_{a^0_{\beta},\beta}\in\mc{O}(E_{\beta}),
$$
where $E_{\beta}\s D_{\beta}\setminus G_{a^0_{\beta}}$ and it is such that any of its connected components contains a nonpluripolar set on which $f\circ\boldsymbol{i}_{a^0_{\beta},\beta}$ and $f_{\alpha}^n\circ\boldsymbol{i}_{a^0_{\beta},\beta}$ coincide. It is then enough to use the identity principle.\\
\indent\textit{Step 3.} Observe that for any $n\in\mb{N}$, any $j=1,\ldots, N$, and for any $\alpha$ from the defining matrix of $\mathbf{Q}$ such that $\alpha_j=0$ we have choosen some $\Gamma^j_n(\alpha).$ It is apparent that for each $n\in\mb{N}$ and each $j=1,\ldots,N$ we have $A_j\s\bigcap\limits_{\alpha:\alpha_j=0}\Gamma^j_n(\alpha).$ Therefore, shrinking $\Gamma_n^j(\alpha)$'s we may assume that for every $j=1,\ldots,N$, every $n\in\mb{N}$, and for any couple $\alpha^1,\alpha^2$ with $\alpha_j^k=0,k=1,2$ we have $\Gamma^j_n(\alpha^1)=\Gamma^j_n(\alpha^2)$ (and all other properties are kept).\\
\indent Moreover, we may assume that for any triple $(n,j,\alpha),$ where $n\in\mb{N}$, $j\in\{1,\ldots,N\}$, and $\alpha$ is from the defining matrix of $\mathbf{Q}$ and such that $\alpha_j=0$, we have 
$$
\Gamma^j_{n+1}(\alpha)\s \Gamma^j_n(\alpha).
$$
Indeed, we choose $\Gamma_1^j$'s as in Step 2. Then assuming we have chosen $\Gamma^j_1,\ldots,\Gamma^j_{n}$, $j=1,\ldots,N,$ we take $\Gamma^{j}_{n+1}$ as in Step 2 and so that $\Gamma_{n+1}^j\s(W^j_{n+2}\cap\Gamma^j_n),j=1,\ldots,N.$\\
\indent Observe that for any couple of branches of $\mathbf{Q},$ say $\mc{X}_{\alpha},\mc{X}_{\beta}$, and for any $(n,k)\in\mb{N}^2$, if $(U_{\alpha}^n\cap U^k_{\beta})\setminus G\neq\varnothing$, then $f_{\alpha}^n=f_{\beta}^k$ on $(U_{\alpha}^n\cap U_{\beta}^k)\setminus G.$ This follows directly from the construction of $U_{\alpha}^n$ and $U^k_{\beta}.$\\
\indent Therefore, we may glue all the $U^n_{\alpha}$'s and $f^n_{\alpha}$'s together and we finish the proof by taking $U_0$ to be the connected component of $\bigcup\limits_{\alpha}\bigcup\limits_nU^n_{\alpha}$ containing $\mathbf{Q}$.    
\end{proof}
\section{``Nice'' descriptions and some geometry}
\label{sec4}
We know that in the context of $(N,k)$-crosses and generalized $(N,k)$-crosses, their envelopes of holomorphy have a nice description in terms of the relative extremal function of the set $A_j$ with respect to $D_j$. For the $\mathscr{A}$-crosses the existence of such description is a more subtle problem. For $N\in\{2,3\}$ the situation is simple, as in this case the $\mathscr{A}$-crosses which are interesting from the point of view of Theorem \ref{quasi} (i.e. those which contain $\mb{X}_{2,1}((A_j,D_j)_{j=1,2})$, and $\mb{X}_{3,1}((A_j,D_j)_{j=1,2,3}),$ respectively) give nothing new in comparison with ``old'' crosses (see Example \ref{2,3}). However, already in the case $N=4$ such descriptions can be various.
\begin{ex}\label{nine}
Let $N=4.$ Let $D_j\s\mb{C}^{n_j}$ be a hyperconvex domain and let $A_j\s D_j$ be compact, locally pluriregular, and locally $L$-regular (see \cite{Si1}), $j=1,\ldots,4$. Consider the $\mathscr{A}$-crosses interesting from the point of view of Theorem \ref{quasi}. A straightforward computation shows that we have to consider exactly nine nontrivial (that is, different from the $(4,k)-$crosses) cases here (up to permutations of variables):\\ 
\begin{center}
\begin{displaymath}
\mathbf{Q}^1:=\mathbf{Q}\scriptsize\left(\!\!\begin{array}{cccc}0&0&0&1\\0&1&1&0\\1&0&0&0\end{array}\!\!\right)=
{\begin{array}{l}
(A_1\times A_2\times A_3\times D_4)\cup\\
(A_1\times D_2\times D_3\times A_4)\cup\\
(D_1\times A_2\times A_3\times A_4),
\end{array}}
\end{displaymath}
\end{center}
\begin{center}
\begin{displaymath}
\mathbf{Q}^2:=\mathbf{Q}\scriptsize\left(\!\!\begin{array}{cccc}0&0&0&1\\1&0&1&0\\1&1&0&0\end{array}\!\!\right)=
\begin{array}{l}
(A_1\times A_2\times A_3\times D_4)\cup\\
(D_1\times A_2\times D_3\times A_4)\cup\\
(D_1\times D_2\times A_3\times A_4),
\end{array}
\end{displaymath}
\end{center}
\begin{center}
\begin{displaymath}
\mathbf{Q}^3:=\mathbf{Q}\scriptsize\left(\!\!\begin{array}{cccc}0&0&1&1\\0&1&0&1\\0&1&1&0\\1&0&0&1\\1&0&1&0\end{array}\!\!\right)=
\begin{array}{l}
(A_1\times A_2\times D_3\times D_4)\cup\\
(A_1\times D_2\times A_3\times D_4)\cup\\
(A_1\times D_2\times D_3\times A_4)\cup\\
(D_1\times A_2\times A_3\times D_4)\cup\\
(D_1\times A_2\times D_3\times A_4),
\end{array}
\end{displaymath}
\end{center}
\begin{center}
\begin{displaymath}
\mathbf{Q}^4:=\mathbf{Q}\scriptsize\left(\!\!\begin{array}{cccc}0&0&1&1\\0&1&1&0\\1&0&0&1\\1&1&0&0\end{array}\!\!\right)
=
\begin{array}{l}
(A_1\times A_2\times D_3\times D_4)\cup\\
(A_1\times D_2\times D_3\times A_4)\cup\\
(D_1\times A_2\times A_3\times D_4)\cup\\
(D_1\times D_2\times A_3\times A_4),
\end{array}
\end{displaymath}
\end{center}
\begin{center}
\begin{displaymath}
\mathbf{Q}^5:=\mathbf{Q}\scriptsize\left(\!\!\begin{array}{cccc}0&1&1&1\\1&0&0&1\\1&1&0&0\end{array}\!\!\right)=
\begin{array}{l}
(A_1\times D_2\times D_3\times D_4)\cup\\
(D_1\times A_2\times A_3\times D_4)\cup\\
(D_1\times D_2\times A_3\times A_4),
\end{array}
\end{displaymath}
\end{center}
\begin{center}
\begin{displaymath}
\mathbf{Q}^6:=\mathbf{Q}\scriptsize\left(\!\!\begin{array}{cccc}0&0&1&1\\1&0&0&1\\1&1&0&0\end{array}\!\!\right)=
\begin{array}{l}
(A_1\times A_2\times D_3\times D_4)\cup\\
(D_1\times A_2\times A_3\times D_4)\cup\\
(D_1\times D_2\times A_3\times A_4),
\end{array}
\end{displaymath}
\end{center}
\begin{center}
\begin{displaymath}
\mathbf{Q}^7:=\mathbf{Q}\scriptsize\left(\!\!\begin{array}{cccc}0&0&1&1\\0&1&0&1\\1&0&0&1\\1&0&1&0\end{array}\!\!\right)=
\begin{array}{l}
(A_1\times A_2\times D_3\times D_4)\cup\\
(A_1\times D_2\times A_3\times D_4)\cup\\
(D_1\times A_2\times A_3\times D_4)\cup\\
(D_1\times A_2\times D_3\times A_4),
\end{array}
\end{displaymath}
\end{center}
\begin{center}
\begin{displaymath}
\mathbf{Q}^8:=\mathbf{Q}\scriptsize\left(\!\!\begin{array}{cccc}0&0&0&1\\0&1&1&0\\1&0&1&0\\1&1&0&0\end{array}\!\!\right)=
\begin{array}{l}
(A_1\times A_2\times A_3\times D_4)\cup\\
(A_1\times D_2\times D_3\times A_4)\cup\\
(D_1\times A_2\times D_3\times A_4)\cup\\
(D_1\times D_2\times A_3\times A_4),
\end{array}
\end{displaymath}
\end{center}
\begin{center}
\begin{displaymath}
\mathbf{Q}^9:=\mathbf{Q}\scriptsize\left(\!\!\begin{array}{cccc}0&1&1&1\\1&0&0&1\\1&0&1&0\\1&1&0&0\end{array}\!\!\right)=
\begin{array}{l}
(A_1\times D_2\times D_3\times D_4)\cup\\
(D_1\times A_2\times A_3\times D_4)\cup\\
(D_1\times A_2\times D_3\times A_4)\cup\\
(D_1\times D_2\times A_3\times A_4).
\end{array}
\end{displaymath}
\end{center}
Then, after some simple computation we get:
\begin{multline*}
\widehat{\mathbf{Q}}^1=\{(z_1,z_2,z_3,z_4)\in D_1\times D_2\times D_3\times D_4:\boldsymbol{h}^{\star}_{A_1,D_1}(z_1)+\boldsymbol{h}^{\star}_{A_4,D_4}(z_4)+\\\max\{\boldsymbol{h}^{\star}_{A_2,D_2}(z_2),\boldsymbol{h}^{\star}_{A_3,D_3}(z_3)\}<1\},
\end{multline*}
\begin{multline*}
\widehat{\mathbf{Q}}^2=\{(z_1,z_2,z_3,z_4)\in D_1\times D_2\times D_3\times D_4:\boldsymbol{h}^{\star}_{A_4,D_4}(z_4)+\\\max\{\boldsymbol{h}^{\star}_{A_1,D_1}(z_1),\boldsymbol{h}^{\star}_{A_2,D_2}(z_2)+\boldsymbol{h}^{\star}_{A_3,D_3}(z_3)\}<1\},
\end{multline*}
\begin{multline*}
\widehat{\mathbf{Q}}^3=\{(z_1,z_2,z_3,z_4)\in D_1\times D_2\times D_3\times D_4:\boldsymbol{h}^{\star}_{A_1,D_1}(z_1)+\boldsymbol{h}^{\star}_{A_2,D_2}(z_2)+\\\max\{\boldsymbol{h}^{\star}_{A_3,D_3}(z_3)+\boldsymbol{h}^{\star}_{A_4,D_4}(z_4)-1,0\}<1\},
\end{multline*}
\begin{multline*}
\widehat{\mathbf{Q}}^4=\{(z_1,z_2,z_3,z_4)\in D_1\times D_2\times D_3\times D_4:\\ \max\{\boldsymbol{h}^{\star}_{A_2,D_2}(z_2)+\boldsymbol{h}^{\star}_{A_4,D_4}(z_4),\boldsymbol{h}^{\star}_{A_1,D_1}(z_1)+\boldsymbol{h}^{\star}_{A_3,D_3}(z_3)\}<1\},
\end{multline*}
\begin{multline*}
\widehat{\mathbf{Q}}^5=\{(z_1,z_2,z_3,z_4)\in D_1\times D_2\times D_3\times D_4:\boldsymbol{h}^{\star}_{A_1,D_1}(z_1)+\\ \max\{\boldsymbol{h}^{\star}_{A_3,D_3}(z_3),\max\{\boldsymbol{h}^{\star}_{A_2,D_2}(z_2)+\boldsymbol{h}^{\star}_{A_4,D_4}(z_4)-1,0\}\}<1\}.
\end{multline*}
Observe that until now we only used the fact that $D_j$'s are pseudoconvex and $A_j$'s are locally pluriregular.\\
\indent The remaining cases are a little bit more complicated.\\
\indent Consider $\mathbf{Q}^6$ and take a function $f\in\mc{F}.$ Observe that:\\
$\bullet$ For any fixed point $a_2\in A_2$ the function $f(\cdot,a_2,\cdot)$ is holomorphic on the set $(\widehat{(A_1\times D_3)\cup(D_1\times A_3)})\times D_4.$ Moreover, for any point $(z_1,z_3,z_4)\in D_1\times A_3\times A_4,$ the function $f(z_1,\cdot,z_3,z_4)$ is holomorphic on $D_2$. Therefore, using Theorem \ref{cross theorem}, we conclude that the envelope of holomorphy of $\mathbf{Q}^6$ equals (up to permutation of variables)
\begin{multline*}
\{(z_2,z_1,z_3,z_4)\in D_2\times(\widehat{(A_1\times D_3)\cup(D_1\times A_3)})\times D_4:\boldsymbol{h}^{\star}_{A_2,D_2}(z_2)+\\\boldsymbol{h}^{\star}_{D_1\times A_3\times A_4,(\widehat{(A_1\times D_3)\cup(D_1\times A_3)})\times D_4}(z_1,z_3,z_4)<1\}.
\end{multline*}
$\bullet$ On the other hand, for any point $(z_1,z_3)\in D_1\times A_3$, the function $f(z_1,\cdot,z_3,\cdot)$ is holomorphic on $\widehat{(A_2\times D_4)\cup(D_2\times A_4)}.$ Moreover, for any point $(z_2,z_4)\in A_2\times D_4$,
the function $f(\cdot,z_2,\cdot,z_4)$ is holomorphic on $\widehat{(A_1\times D_3)\cup(D_1\times A_3)}.$ Using once again Theorem \ref{cross theorem}, we see that the envelope of holomorphy of $\mathbf{Q}$ equals (up to permutation of variables)
\begin{multline*}
\{(z_2,z_4,z_1,z_3)\in \widehat{(A_2\times D_4)\cup(D_2\times A_4)}\times\widehat{(A_1\times D_3)\cup(D_1\times A_3)}:\\
\boldsymbol{h}^{\star}_{D_1\times A_3,\widehat{(A_1\times D_3)\cup(D_1\times A_3)}}(z_1,z_3)+\boldsymbol{h}^{\star}_{A_2\times D_4,\widehat{(A_2\times D_4)\cup(D_2\times A_4)}}(z_2,z_4)<1
\}.
\end{multline*}
\indent From the above bullets it follows that any arbitrary point $(z_1,z_2,z_3,z_4)\in D_1\times D_2\times D_3\times D_4$ satisfies the following system of conditions:
\begin{align}\label{A}
\boldsymbol{h}^{\star}_{A_2,D_2}(z_2)+\boldsymbol{h}^{\star}_{A_4,D_4}(z_4)<1,
\end{align}
\begin{align}\label{B}
\boldsymbol{h}^{\star}_{A_1,D_1}(z_1)+\boldsymbol{h}^{\star}_{A_3,D_3}(z_3)<1,
\end{align}
\begin{align}\label{C}
\boldsymbol{h}^{\star}_{D_1\times A_3,\widehat{(A_1\times D_3)\cup(D_1\times A_3)}}(z_1,z_3)+\boldsymbol{h}^{\star}_{A_2\times D_4,\widehat{(A_2\times D_4)\cup(D_2\times A_4)}}(z_2,z_4)<1
\end{align}
iff it satisfies the following system of conditions:
\begin{align}\label{D}
\boldsymbol{h}^{\star}_{A_2,D_2}(z_2)+\boldsymbol{h}^{\star}_{A_4,D_4}(z_4)<1,
\end{align}
\begin{align}\label{E}
\boldsymbol{h}^{\star}_{A_1,D_1}(z_1)+\boldsymbol{h}^{\star}_{A_3,D_3}(z_3)<1,
\end{align}
\begin{align}\label{F}
\boldsymbol{h}^{\star}_{A_2,D_2}(z_2)+\boldsymbol{h}^{\star}_{D_1\times A_3,\widehat{(A_1\times D_3)\cup(D_1\times A_3)}}<1.
\end{align}
We shall prove the following\\
\emph{Claim.} 
\begin{displaymath}
\boldsymbol{h}^{\star}_{A_2\times D_4,\widehat{(A_2\times D_4)\cup(D_2\times A_4)}}(z_2,z_4)=\boldsymbol{h}^{\star}_{A_2,D_2}(z_2)
\end{displaymath} 
for $(z_2,z_4)\in\widehat{(A_2\times D_4)\cup(D_2\times A_4)}.$
\begin{proof}[Proof of Claim] Observe that the inequality $\geq$ is evident. 
Suppose, seeking a contradiction, that there exists a point $(z^0_2,z^0_4)\in\widehat{(A_2\times D_4)\cup(D_2\times A_4)}$, and numbers $\alpha,\beta\in(0,1)$ such that
\begin{displaymath}
\alpha=\boldsymbol{h}^{\star}_{A_2\times D_4,\widehat{(A_2\times D_4)\cup(D_2\times A_4)}}(z^0_2,z^0_4)>\boldsymbol{h}^{\star}_{A_2,D_2}(z^0_2)=\alpha-\beta.
\end{displaymath} 
We know (Proposition 4.5.2 from \cite{K1}) that there exists a $z^0_3\in D_3$ such that $\boldsymbol{h}^{\star}_{A_3,D_3}(z^0_3)=1-\alpha.$\\
Finally, take any $z^0_1\in A_1.$ Then
\begin{multline*}
1-\alpha=\boldsymbol{h}^{\star}_{A_3,D_3}(z^0_3)\leq \boldsymbol{h}^{\star}_{D_1\times A_3,\widehat{(D_1\times A_3)\cup(A_1\times D_3)}}(z^0_1,z^0_3)\leq\\
\boldsymbol{h}^{\star}_{A_1\times A_3,\widehat{(D_1\times A_3)\cup(A_1\times D_3)}}(z^0_1,z^0_3)=\boldsymbol{h}^{\star}_{A_1,D_1}(z^0_1)+\boldsymbol{h}^{\star}_{A_3,D_3}(z^0_3)=1-\alpha.
\end{multline*}
Therefore, $\boldsymbol{h}^{\star}_{D_1\times A_3,\widehat{(D_1\times A_3)\cup(A_1\times D_3)}}(z^0_1,z^0_3)=1-\alpha.$ Observe that in this situation the point $(z^0_1,z^0_2,z^0_3,z^0_4)$ satisfies the conditions (\ref{A}), (\ref{B}), (\ref{D}), (\ref{E}), and (\ref{F}), while it does not satisfy the condition (\ref{C}), which is a~contradiction.
\end{proof}
Making use of the above claim we conclude that the description under our interest is:
\begin{multline*}
\widehat{\mathbf{Q}}^6=\{(z_1,z_2,z_3,z_4)\in D_1\times D_2\times D_3\times D_4:\max\{\boldsymbol{h}^{\star}_{A_1,D_1}(z_1)+\boldsymbol{h}^{\star}_{A_3,D_3}(z_3),\\
\boldsymbol{h}^{\star}_{A_2,D_2}(z_2)+\boldsymbol{h}^{\star}_{A_4,D_4}(z_4),\boldsymbol{h}^{\star}_{A_2,D_2}(z_2)+\boldsymbol{h}^{\star}_{A_3,D_3}(z_3)\}<1\}.
\end{multline*}
Note here we needed the hyperconvexity of $D_j$'s. Also, we used only the local pluriregularity of $A_j$'s. The full system of assumptions for $A_j$'s (with $D_j$'s pseudoconvex) is needed to solve the last three cases.\\
\indent Consider $\mathbf{Q}^7$ and take a function $f\in\mc{F}.$ Observe that:\\
$\bullet$ For any fixed point $a_2\in A_2$, the function $f(\cdot,a_2,\cdot)$ is holomorphic on $\widehat{\mb{X}}_{3,2}((A_j,D_j)_{j=1,3,4}).$ Moreover, for any point $(z_1,z_3,z_4)\in A_1\times A_3\times D_4,$ the function $f(z_1,\cdot,z_3,z_4)$ is holomorphic on $D_2$. Therefore, using Theorem \ref{cross theorem}, we conclude that the envelope of holomorphy of $\mathbf{Q}^7$ equals (up to permutation of variables)
\begin{multline*}
\{(z_2,z_1,z_3,z_4)\in D_2\times \widehat{\mb{X}}_{3,2}((A_j,D_j)_{j=1,3,4}):\\
\boldsymbol{h}^{\star}_{A_2,D_2}(z_2)+\boldsymbol{h}^{\star}_{A_1\times A_3\times D_4,\widehat{\mb{X}}_{3,2}((A_j,D_j)_{j=1,3,4})}(z_1,z_3,z_4)<1
\}.
\end{multline*}
We shall prove the following\\
\emph{Claim.} 
\begin{multline*}
\boldsymbol{h}^{\star}_{A_1\times A_3\times D_4,\widehat{\mb{X}}_{3,2}((A_j,D_j)_{j=1,3,4})}(z_1,z_3,z_4)=\\
\max\{\boldsymbol{h}^{\star}_{A_1,D_1}(z_1),\boldsymbol{h}^{\star}_{A_3,D_3}(z_3),\boldsymbol{h}^{\star}_{A_1,D_1}(z_1)+\boldsymbol{h}^{\star}_{A_3,D_3}(z_3)+\boldsymbol{h}^{\star}_{A_4,D_4}(z_4)-1\}
\end{multline*}
for $(z_1,z_3,z_4)\in\widehat{\mb{X}}_{3,2}((A_j,D_j)_{j=1,3,4}).$
\begin{proof}[Proof of Claim] The inequality $\geq$ is obvious as the right-hand side is from the defining family for the left-hand side. In order to prove the equality, we may assume that $D_j$ is relatively compact and strongly pseudoconvex, $j=1,\ldots,4$ (use Proposition 3.2.25 from \cite{J2}).\\
\indent Choose an increasing sequence $(K_j)_{j=1}^{\infty}$ of holomorphically convex locally $L$-regular compacta in $D_4$ containing $A_4$ and such that $\bigcup_{j=1}^{\infty}K_j=D_4$ (this is possible by the existence of an exhausting sequence of holomorphically convex compacta in $D_4$ and using \cite{Ze2}; see also \cite{L2}). Put
$$
L_s(z_1,z_3,z_4):= \boldsymbol{h}^{\star}_{A_1\times A_3\times K_s,\widehat{\mb{X}}_{3,2}((A_j,D_j)_{j=1,3,4})}(z_1,z_3,z_4)
$$ 
for $s\in\mb{N}$ and $(z_1,z_3,z_4)\in\widehat{\mb{X}}_{3,2}((A_j,D_j)_{j=1,3,4}).$ Observe that the functions $L_s$ are continuous (use \cite{Si1}). Moreover, we have the equality $(dd^cL_s)^n=0$ on the set
$\widehat{\mb{X}}_{3,2}((A_j,D_j)_{j=1,3,4})\setminus~(A_1\times A_3\times K_s)=:V_s,s\in\mb{N},$ where $(dd^c)^n$ is the complex Monge-Amp{\`e}re operator (\cite{BT1}).\\
\indent Furthermore, for any $z_0\in\partial{V_s}$ there is
\begin{multline*}
\liminf_{V_s\ni z\rightarrow z_0}(\max\{\boldsymbol{h}^{\star}_{A_1,D_1}(z_1),\boldsymbol{h}^{\star}_{A_3,D_3}(z_3),\\\boldsymbol{h}^{\star}_{A_1,D_1}(z_1)+\boldsymbol{h}^{\star}_{A_3,D_3}(z_3)+\boldsymbol{h}^{\star}_{A_4,D_4}(z_4)-1\}
-L_s(z_1,z_3,z_4))\geq 0,
\end{multline*}
and, by the domination principle (Corollary 3.7.4 from \cite{K1}), 
\begin{multline*}
\boldsymbol{h}^{\star}_{A_1\times A_3\times D_4,\widehat{\mb{X}}_{3,2}((A_j,D_j)_{j=1,3,4})}(z_1,z_3,z_4)\leq L_s(z_1,z_3,z_4)=\\\max\{\boldsymbol{h}^{\star}_{A_1,D_1}(z_1),\boldsymbol{h}^{\star}_{A_3,D_3}(z_3),\boldsymbol{h}^{\star}_{A_1,D_1}(z_1)+\boldsymbol{h}^{\star}_{A_3,D_3}(z_3)+\boldsymbol{h}^{\star}_{A_4,D_4}(z_4)-1\}
\end{multline*}
for $s\in\mb{N}$ and $(z_1,z_3,z_4)\in\widehat{\mb{X}}_{3,2}((A_j,D_j)_{j=1,3,4}),$ 
from which follows the conslusion.
\end{proof}
Making use of the above claim we conclude that our description is given by
\begin{multline*}
\widehat{\mathbf{Q}}^7=\{(z_1,z_2,z_3,z_4)\in D_1\times D_2\times D_3\times D_4:
\boldsymbol{h}^{\star}_{A_2,D_2}(z_2)+\\\max\{\boldsymbol{h}^{\star}_{A_1,D_1}(z_1),\boldsymbol{h}^{\star}_{A_3,D_3}(z_3),\boldsymbol{h}^{\star}_{A_1,D_1}(z_1)+\boldsymbol{h}^{\star}_{A_3,D_3}(z_3)+\boldsymbol{h}^{\star}_{A_4,D_4}(z_4)-1\}<1
\}.
\end{multline*}
\indent The envelope of holomorphy of $\mathbf{Q}^8$ is of the form
\begin{multline*}
\{(z_1,z_2,z_3,z_4)\in \widehat{\mb{X}}_{3,2}((A_j,D_j)_{j=1}^3)\times D_4:\boldsymbol{h}^{\star}_{A_4,D_4}(z_4)+\\\boldsymbol{h}^{\star}_{A_1\times A_2\times A_3,\widehat{\mb{X}}_{3,2}((A_j,D_j)_{j=1}^3)}<1\}.
\end{multline*}
Using Proposition \ref{center} below we see that here the desired description is given by
\begin{multline*}
\widehat{\mathbf{Q}}^8=\Big\{(z_1,z_2,z_3,z_4)\in D_1\times D_2\times D_3\times D_4:
\boldsymbol{h}^{\star}_{A_4,D_4}(z_4)+\\
\max\Big\{\frac{1}{2}(\boldsymbol{h}^{\star}_{A_1,D_1}(z_1)+\boldsymbol{h}^{\star}_{A_2,D_2}(z_2)+\boldsymbol{h}^{\star}_{A_3,D_3}(z_3)),\max_{j=1,2,3}\{\boldsymbol{h}^{\star}_{A_j,D_j}(z_j)\}\Big\}<1\Big\}.
\end{multline*}
\indent As far as $\mathbf{Q}^9$ is concerned, its envelope of holomorphy is of the form
\begin{multline*}
\{(z_1,z_2,z_3,z_4)\in D_1\times D_2\times D_3\times D_4:\boldsymbol{h}^{\star}_{A_1,D_1}(z_1)+\\\boldsymbol{h}^{\star}_{\mb{X}_{3,1}((A_j,D_j)_{j=1}^3),D_1\times D_2\times D_3,}<1\}.
\end{multline*}
By Proposition \ref{envelope in envelope} below we get the following description:
\begin{multline*}
\widehat{\mathbf{Q}}^9=\{(z_1,z_2,z_3,z_4)\in D_1\times D_2\times D_3\times D_4:\boldsymbol{h}^{\star}_{A_1,D_1}(z_1)+\\
\frac{1}{2}(\boldsymbol{h}^{\star}_{A_2,D_2}(z_2)+\boldsymbol{h}^{\star}_{A_3,D_3}(z_3)+\boldsymbol{h}^{\star}_{A_4,D_4}(z_4)-1)<1\}
\end{multline*}
(cf. Example \ref{starting}).
\end{ex}
After the above example we could possibly expect that any set which can be described in a similar way as all envelopes of holomorphy from the above example, must be an envelope of holomorphy of certain $\mathscr{A}$-cross. This is however not the case. For instance, let $N=4$ and keep the assumptions from Example \ref{nine}. Put
\begin{multline*}
\tilde{\mathbf{Q}}:=\{(z_1,z_2,z_3,z_4)\in D_1\times D_2\times D_3\times D_4:\max\{\boldsymbol{h}^{\star}_{A_2,D_2}(z_2)+\boldsymbol{h}^{\star}_{A_4,D_4}(z_4),\\
\boldsymbol{h}^{\star}_{A_1,D_1}(z_1)+\boldsymbol{h}^{\star}_{A_2,D_2}(z_2)+\boldsymbol{h}^{\star}_{A_3,D_3}(z_3),\boldsymbol{h}^{\star}_{A_1,D_1}(z_1)+\boldsymbol{h}^{\star}_{A_3,D_3}(z_3)+\boldsymbol{h}^{\star}_{A_4,D_4}(z_4)\}<1\}.
\end{multline*}
It follows from some simple but a little bit tedious calculations that there is no $\mathscr{A}$-cross $\mathbf{Q}$ with the defining matrix of dimension $(l_1+l_2+l_3+l_4)\times 4$ such that the set $\tilde{\mathbf{Q}}$ is the envelope of holomorphy of $\mathbf{Q}.$ This gives rise to the (open) question whether there exists some other cross-like object $\mathbf{C}$ such that $\tilde{Q}$ is the envelope of holomorphy of $\mathbf{C}$.\\
\section{Appendix}
We prove two formulas for the relative extremal function, which were used in Example \ref{nine}, and which are interesting on their own.
\begin{prop}[cf. \cite{Si1}, \cite{J2}, Proposition 3.2.28]
\label{center} 
Let $D_j\s\mb{C}^{n_j}$ be a pseudoconvex domain and let $A_j\s D_j$ be locally pluriregular, compact and locally $L$-regular, $j=1,\ldots,N.$ Put $\mathbf{X}_{N,k}:=\mb{X}_{N,k}((A_j,D_j)_{j=1}^N).$ Then
\begin{multline*}
L(z):=\boldsymbol{h}^{\star}_{A_1\times\ldots\times A_N,\widehat{\mathbf{X}}_{N,k}}(z)=\\\max\Big\{\frac{1}{k}\sum\limits_{j=1}^N\boldsymbol{h}^{\star}_{A_j,D_j}(z_j),\max\limits_{j=1,\ldots,N}\{\boldsymbol{h}^{\star}_{A_j,D_j}(z_j)\}\Big\}=:R(z)
\end{multline*}
for every $z=(z_1,\ldots,z_N)\in\widehat{\mathbf{X}}_{N,k}.$ 
\end{prop}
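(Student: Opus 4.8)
The plan is to prove the two inequalities $L \geq R$ and $L \leq R$ separately, with the first being essentially formal and the second requiring a Monge--Amp\`ere argument analogous to the proof of the Claim for $\widehat{\mathbf{Q}}^7$.

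\textbf{The easy inequality.} First I would check $L \geq R$. Since $A_1 \times \ldots \times A_N$ is contained in each $\widehat{\mathbf{X}}_{N,k}$ and the function $R$ is plurisubharmonic on $\widehat{\mathbf{X}}_{N,k}$ (as the maximum of plurisubharmonic functions $z \mapsto \frac{1}{k}\sum_j \boldsymbol{h}^{\star}_{A_j,D_j}(z_j)$ and $z \mapsto \max_j \boldsymbol{h}^{\star}_{A_j,D_j}(z_j)$, each pulled back via projections), it suffices to verify $R \leq 1$ on $\widehat{\mathbf{X}}_{N,k}$ and $R \leq 0$ on $A_1 \times \ldots \times A_N$. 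The latter is immediate from $\boldsymbol{h}^{\star}_{A_j,D_j}|_{A_j} = 0$ (using local pluriregularity). For the former, note that on $\widehat{\mathbf{X}}_{N,k}$ we have $\sum_j \boldsymbol{h}^{\star}_{A_j,D_j}(z_j) < k$, so $\frac{1}{k}\sum_j \boldsymbol{h}^{\star}_{A_j,D_j}(z_j) < 1$; and since each $\boldsymbol{h}^{\star}_{A_j,D_j}(z_j) \leq 1$ (as these functions are bounded by $1$ by definition), we also get $\max_j \boldsymbol{h}^{\star}_{A_j,D_j}(z_j) \leq 1$. Hence $R \leq 1$, so $R$ lies in the defining family of $L$ and $R \leq L$.

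\textbf{The hard inequality.} For $L \leq R$ I would follow the template of the $\widehat{\mathbf{Q}}^7$-Claim. Using Proposition 3.2.25 from \cite{J2}, reduce to the case where each $D_j$ is relatively compact and strongly pseudoconvex. Then, for each $j$, pick an increasing sequence $(K_n^j)_{n}$ of holomorphically convex, locally $L$-regular compacta with $A_j \subset K_n^j$ and $\bigcup_n K_n^j = D_j$ (via \cite{Ze2}, \cite{L2}). Set
$$
L_n(z) := \boldsymbol{h}^{\star}_{K_n^1 \times \ldots \times K_n^N,\widehat{\mathbf{X}}_{N,k}}(z).
$$
By \cite{Si1} the functions $L_n$ are continuous, and $(dd^c L_n)^{Nn} = 0$ (writing $Nn = n_1 + \ldots + n_N$ for the total dimension) on $V_n := \widehat{\mathbf{X}}_{N,k} \setminus (K_n^1 \times \ldots \times K_n^N)$. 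The key point is then a boundary estimate: for $z_0 \in \partial V_n$ (relative to $\widehat{\mathbf{X}}_{N,k}$) one has
$$
\liminf_{V_n \ni z \to z_0} \big(R(z) - L_n(z)\big) \geq 0,
$$
because on that part of the boundary we have $z_0 \in K_n^1 \times \ldots \times K_n^N$, where $L_n$ has boundary value $0$ while $R \geq 0$. Since $R \in \mathcal{PSH}(\widehat{\mathbf{X}}_{N,k})$ with $(dd^c R)^{Nn} = 0$ off the ``skeleton'', the domination principle (Corollary 3.7.4 from \cite{K1}) yields $L_n \leq R$ on $\widehat{\mathbf{X}}_{N,k}$. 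Finally, $L = \boldsymbol{h}^{\star}_{A_1\times\ldots\times A_N,\widehat{\mathbf{X}}_{N,k}} \leq L_n$ for all $n$ (since $A_1 \times \ldots \times A_N \subset K_n^1 \times \ldots \times K_n^N$), so $L \leq R$.

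\textbf{Main obstacle.} I expect the delicate step to be the verification that $R$ is genuinely ``maximal'' (satisfies the homogeneous Monge--Amp\`ere equation) on the complement of the polydisc-skeleton, which is what lets the domination principle compare $R$ with $L_n$. One must argue that both $\frac{1}{k}\sum_j \boldsymbol{h}^{\star}_{A_j,D_j}(z_j)$ and $\max_j \boldsymbol{h}^{\star}_{A_j,D_j}(z_j)$ are maximal off the relevant sets and that their maximum remains so --- here the hyperconvexity of the $D_j$'s and the local $L$-regularity of the $A_j$'s (ensuring the $\boldsymbol{h}^{\star}_{A_j,D_j}$ are continuous and behave well near $A_j$) are essential, exactly as remarked after the computations for $\widehat{\mathbf{Q}}^6$ and $\widehat{\mathbf{Q}}^7$. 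A secondary technical care is needed in matching boundary behavior of $L_n$ along the ``corner'' structure of $K_n^1 \times \ldots \times K_n^N$ inside $\widehat{\mathbf{X}}_{N,k}$, but this is handled by the continuity of $L_n$ together with $L_n|_{K_n^1 \times \ldots \times K_n^N} = 0$.
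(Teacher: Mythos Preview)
Your argument for $L \leq R$ contains a genuine error in the final step. You claim ``$L = \boldsymbol{h}^{\star}_{A_1\times\ldots\times A_N,\widehat{\mathbf{X}}_{N,k}} \leq L_n$ for all $n$ (since $A_1 \times \ldots \times A_N \subset K_n^1 \times \ldots \times K_n^N$)'', but the relative extremal function is \emph{decreasing} in the set: enlarging the set shrinks the extremal function. Hence $A_1\times\cdots\times A_N \subset K_n^1\times\cdots\times K_n^N$ gives $L \geq L_n$, not $L \leq L_n$. Even if you establish $L_n \leq R$, this only yields $L_n \leq R$ together with $L_n \leq L$, which says nothing about $L$ versus $R$; and since $K_n^j \uparrow D_j$, in fact $L_n \downarrow 0$, so no information about $L$ survives in the limit. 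The approximation scheme therefore cannot be salvaged in this direction.

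The device you borrowed from the $\widehat{\mathbf{Q}}^7$ Claim is simply out of place here. In that Claim the set $A_1\times A_3\times D_4$ carries an \emph{open} factor $D_4$, so one must approximate $D_4$ from inside by compacta $K_s$; this produces continuous auxiliary functions $L_s$ with $L_s \geq \boldsymbol{h}^{\star}_{A_1\times A_3\times D_4,\cdot}$ (the inequality now goes the right way), and bounding $L_s$ by the right-hand side settles the claim. In the present proposition, however, $A_1\times\cdots\times A_N$ is already compact and locally $L$-regular, so no approximation is needed at all. This is exactly what the paper does: by Siciak's result $L$ itself is continuous, hence $(dd^cL)^n=0$ on $V:=\widehat{\mathbf{X}}_{N,k}\setminus(A_1\times\cdots\times A_N)$ directly, and (after the reduction to relatively compact strongly pseudoconvex $D_j$) the domination principle is applied to $R$ and $L$ on $V$ without any auxiliary sequence. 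Your ``easy inequality'' and the reduction step are fine and match the paper; it is the insertion of the exhaustion $(K_n^j)$ that breaks the argument.
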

\begin{proof}
Note that in view of the assumptions $\boldsymbol{h}^{\star}_{A_j,D_j}$ is continuous, $j=1,\ldots,N,$ and so is $L$ (the last observation follows from \cite{Si1}).\\ 
Observe that the inequality $L\geq R$ is obvious and $R$ is from the defining family for $L$, so we only need to prove the opposite inequality.\\
\indent We may assume that $D_j$ is relatively compact and strongly pseudoconvex, $j=1,\ldots,N$.\\
\indent We have $(dd^cL)^n=0$ on the set $\widehat{\mathbf{X}}_{N,k}\setminus(A_1\times\ldots\times A_N)=:V.$ Moreover, for any $z_0\in\partial V$ there is
$$
\liminf\limits_{V\ni z\rightarrow z_0}(R(z)-L(z))\geq 0,
$$
from which follows (in view of the domination principle)
$$
R\geq L \text{ on } \widehat{\mathbf{X}}_{N,k}\setminus(A_1\times\ldots\times A_N).
$$
Thus $R\geq L$ on $\widehat{\mathbf{X}}_{N,k}.$
\end{proof}
\begin{ex}
Let $0<r<R$ be two real numbers. Let $D_j=\mb{B}_{n_j}(R)\s\mb{C}^{n_j}$ be the euclidean open ball with center at $0$ and radius $R$ and let $A_j=\overline{\mb{B}_{n_j}}(r)$ be the euclidean closed ball with center at $0$ and radius $r$,\ $j=1,\ldots,N.$ Then $\boldsymbol{h}^{\star}_{A_j,D_j}=\max\left\{0,\frac{\log\frac{||\cdot||}{r}}{\log\frac{R}{r}}\right\}, j=1,\ldots,N$, and thus 
\begin{displaymath}
\boldsymbol{h}^{\star}_{A_1\times\ldots\times A_N,B}(z)=
\max\left\{\frac{1}{k}\sum_{j=1}^N\max\left\{0,\frac{\log\frac{||z_j||}{r}}{\log\frac{R}{r}}\right\},
\max\limits_{j=1,\ldots,N}\left\{0,\frac{\log\frac{||z_j||}{r}}{\log\frac{R}{r}}\right\}
\right\},
\end{displaymath}
for $z=(z_1,\ldots,z_N)\in B,$ where
\begin{displaymath}
B:=\left\{(z_1,\ldots,z_N)\in D_1\times\ldots\times D_N:\sum\limits_{j=1}^N
\max\left\{0,\frac{\log\frac{||z_j||}{r}}{\log\frac{R}{r}}\right\}<k\right\}.
\end{displaymath} 
\end{ex}
\begin{prop}[cf. \cite{J3}]
\label{envelope in envelope}
Let $1\leq k<l\leq N.$ Let $D_j\s\mb{C}^{n_j}$ be a pseudoconvex domain and let $A_j\s D_j$ be compact, locally pluriregular and locally $L$-regular, $j=1,\ldots,N.$ Then
\begin{multline*}
L(z):=\boldsymbol{h}^{\star}_{\widehat{\mathbf{X}}_{N,k},\widehat{\mathbf{X}}_{N,l}}(z)=\boldsymbol{h}^{\star}_{\mathbf{X}_{N,k},\widehat{\mathbf{X}}_{N,l}}(z)
=\\\max\left\{0,\frac{\sum\limits_{j=1}^N\boldsymbol{h}^{\star}_{A_j,D_j}(z_j)-k}{l-k}\right\}=:R(z)
\end{multline*}
for $z=(z_1,\ldots,z_N)\in\widehat{\mathbf{X}}_{N,l}.$
\end{prop}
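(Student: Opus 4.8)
The plan is to peel off the elementary inequalities, reducing the whole identity to a single estimate, and then to settle that estimate by a complex Monge--Amp\`ere / domination-principle argument, exactly in the spirit of Proposition~\ref{center} and of the Claims for $\mathbf{Q}^6$ and $\mathbf{Q}^7$.

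First I would collect the elementary properties of $R$. It is plurisubharmonic on $\widehat{\mathbf{X}}_{N,l}$, being the maximum of $0$ with the positive affine combination $(l-k)^{-1}\big(\sum_{j=1}^{N}\boldsymbol{h}^{\star}_{A_j,D_j}(z_j)-k\big)$ of the plurisubharmonic functions $z\mapsto\boldsymbol{h}^{\star}_{A_j,D_j}(z_j)$; it satisfies $R<1$ there, since $\sum_{j}\boldsymbol{h}^{\star}_{A_j,D_j}(z_j)<l$ on $\widehat{\mathbf{X}}_{N,l}$; and it vanishes on $\widehat{\mathbf{X}}_{N,k}$, hence a fortiori on $\mathbf{X}_{N,k}\subset\widehat{\mathbf{X}}_{N,k}$. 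Therefore $R$ belongs to the defining families of both $\boldsymbol{h}_{\mathbf{X}_{N,k},\widehat{\mathbf{X}}_{N,l}}$ and $\boldsymbol{h}_{\widehat{\mathbf{X}}_{N,k},\widehat{\mathbf{X}}_{N,l}}$, so $R\leq\boldsymbol{h}^{\star}_{\widehat{\mathbf{X}}_{N,k},\widehat{\mathbf{X}}_{N,l}}$, while from $\mathbf{X}_{N,k}\subset\widehat{\mathbf{X}}_{N,k}$ and monotonicity of the relative extremal function one gets $\boldsymbol{h}^{\star}_{\widehat{\mathbf{X}}_{N,k},\widehat{\mathbf{X}}_{N,l}}\leq\boldsymbol{h}^{\star}_{\mathbf{X}_{N,k},\widehat{\mathbf{X}}_{N,l}}$. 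Thus the whole proposition reduces to the single inequality $\boldsymbol{h}^{\star}_{\mathbf{X}_{N,k},\widehat{\mathbf{X}}_{N,l}}\leq R$ on $\widehat{\mathbf{X}}_{N,l}$.

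To prove it I would first reduce, via Proposition~3.2.25 from \cite{J2}, to the case where each $D_j$ is relatively compact and strongly pseudoconvex; then, by \cite{Si1} and local $L$-regularity, every $\boldsymbol{h}^{\star}_{A_j,D_j}$ is continuous on $D_j$, and hence $R$ is continuous on $\widehat{\mathbf{X}}_{N,l}$. Writing $n:=n_1+\dots+n_N$ and $u:=\boldsymbol{h}^{\star}_{\mathbf{X}_{N,k},\widehat{\mathbf{X}}_{N,l}}$, I note that $\mathbf{X}_{N,k}$, being a finite union of products of locally pluriregular sets, is relatively closed in $\widehat{\mathbf{X}}_{N,l}$ and locally pluriregular, so $u\equiv 0$ on $\mathbf{X}_{N,k}$ and $(dd^{c}u)^{n}=0$ on $\widehat{\mathbf{X}}_{N,l}\setminus\mathbf{X}_{N,k}$; on the other hand, computing the mixed Monge--Amp\`ere masses of the split sum $\sum_{j}\boldsymbol{h}^{\star}_{A_j,D_j}(z_j)$ (only the term $\prod_{j}(dd^{c}\boldsymbol{h}^{\star}_{A_j,D_j})^{n_j}$ survives, every factor of which vanishes off $A_j$, and $A_1\times\dots\times A_N\subset\widehat{\mathbf{X}}_{N,k}$, where $R\equiv 0$) one sees that $(dd^{c}R)^{n}=0$ on all of $\widehat{\mathbf{X}}_{N,l}$. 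It remains to show $u\leq R$; since $u$ and $R$ are both maximal off $\mathbf{X}_{N,k}$ and vanish on it, this is a comparison of two solutions of a homogeneous Monge--Amp\`ere problem. I would carry it out by the domination principle (Corollary~3.7.4 from \cite{K1}), handling the boundary of $\widehat{\mathbf{X}}_{N,l}$ as in the Claim for $\mathbf{Q}^7$: exhaust each $D_j$ by holomorphically convex, locally $L$-regular compacta $A_j\subset K^{j}_s\nearrow D_j$ (available by \cite{Ze2}, cf. \cite{L2}), replace $\mathbf{X}_{N,k}$ by the shrunken poles obtained by substituting $K^{j}_s$ for the $D_j$-factors, use the continuity of the associated relative extremal functions together with the $\leq 1$ bound and the continuity of $R$ to verify the needed boundary estimate, apply the domination principle, and pass to the limit (monotonicity in the pole).

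The step I expect to be the genuine obstacle is exactly this final comparison and the boundary control it requires: $\widehat{\mathbf{X}}_{N,l}$ is not a priori presented as a bounded hyperconvex domain, and neither $u$ nor $R$ need extend continuously to its boundary, so the domination/comparison principle cannot be applied carelessly. The exhaustion by the compacta $K^{j}_s$ — which makes the poles well-behaved and keeps the estimate on the correct side of the boundary, after which a monotone passage to the limit restores the statement for $\mathbf{X}_{N,k}$ — is the technical heart; the remaining points are bookkeeping, and the very same scheme already appears in Proposition~\ref{center} and in the Claims for $\mathbf{Q}^6$ and $\mathbf{Q}^7$.
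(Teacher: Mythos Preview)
Your approach is essentially the same as the paper's: after reducing to relatively compact strongly pseudoconvex $D_j$'s and noting that $R$ lies in the defining family, the paper exhausts each $D_j$ by holomorphically convex locally $L$-regular compacta $K^s_j\supset A_j$, sets $L_s:=\boldsymbol{h}^{\star}_{\mathbf{X}^s_{N,k},\widehat{\mathbf{X}}_{N,l}}$ with $\mathbf{X}^s_{N,k}:=\mb{X}_{N,k}((A_j,K^s_j)_{j=1}^N)$, uses continuity of $L_s$ (via \cite{Si1}) and $(dd^cL_s)^n=0$ off $\mathbf{X}^s_{N,k}$ together with the boundary inequality $\liminf_{V_s\ni z\to z_0}(R(z)-L_s(z))\geq 0$ to apply the domination principle, and concludes $R\equiv L_s$ for every $s$, hence $R\equiv L$. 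Your extra remark that $(dd^cR)^n=0$ everywhere is correct but not needed for the argument as the paper runs it.
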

\begin{proof} Observe that we only need to prove the third equality.
We may assume that $D_j$ is relatively compact and strongly pseudoconvex, $j=1,\ldots,N$.\\
\indent Observe that the inequality $\geq$ is obviuos, as the function $R$ belongs to the defining family for $L$. All we need to show is the opposite inequality.\\
For any $j=1,\ldots,N $ choose an increasing sequence $(K^s_j)_{s\in\mb{N}}$ of holomorphically convex locally $L$-regular compacta in $D_j$ containing $A_j$ and such that $\bigcup_{s=1}^{\infty}K^s_j=D_j$ (this is possible by the existence of an exhausting sequence of holomorphically convex compacta in $D_j$ and using \cite{Ze2}).\\
\indent Define 
\begin{displaymath}
L_s(z):=\boldsymbol{h}^{\star}_{\mathbf{X}^s_{N,k},\widehat{\mathbf{X}}_{N,l}}(z),
\end{displaymath}
$z\in \widehat{\mathbf{X}}_{N,l},$ where
$\mathbf{X}_{N,k}^s:=\mb{X}_{N,k}((A_j,{K_j^s})_{j=1}^N)$ (the $(N,k)$-crosses are defined for open $D_j$'s. However, in our context, the definition of $\mathbf{X}^s_{N,k}$ - formally the same as the definition of the $(N,k)$-cross - makes sense, as it is only set-theoretical).\\
\indent Notice that the functions $L_s$ are all continuous (see \cite{Si1}).\\
For a fixed $s\in\mb{N}$ there is 
\begin{displaymath}
L_s(z)\geq
\max\left\{0,\frac{\sum\limits_{j=1}^N\boldsymbol{h}^{\star}_{A_j,D_j}(z_j)-k}{l-k}\right\}.
\end{displaymath}
Furthermore, $(dd^c\boldsymbol{h}^{\star}_{\mathbf{X}^s_{N,k},\widehat{\mathbf{X}}_{N,l}})^n=0$ on $V_s:=\widehat{\mathbf{X}}_{N,l}\setminus\mathbf{X}^s_{N,k}$ and for any $z_0\in\partial V_s$ there is 
$$
\liminf\limits_{V_s\ni z\rightarrow z_0}(R(z)-L_s(z))\geq 0,
$$
from which follows that $R\equiv L_s,s\in\mb{N},$ and so $R\equiv L.$ 
\end{proof}

\end{document}